\newtheorem{theorem}{Theorem}
\newtheorem{lemma}{Lemma}
\newtheorem{corollary}{Corollary}
\newtheorem{definition}{Definition}
\newtheorem{remark}{Remark}
\newtheorem{assumption}{Assumption}
\newcounter{problem}
\newcommand{\bs}{\boldsymbol}
\newcommand{\mr}[1]{\mathrm{#1}}
\newcommand{\E}{\mathrm{\bf E}}
\newcommand{\Rb}{\mathbb{R}}
\newcommand{\argmin}{\operatornamewithlimits{arg\,min}}
\renewcommand{\eqref}[1]{Eq.~\mbox{(\ref{#1})}}
\newcommand{\Rmnum}[1]{\expandafter\@slowromancap\romannumeral #1@}
\DeclareMathAlphabet{\mathpzc}{OT1}{pzc}{m}{it}
\DeclareMathAlphabet{\mathcalligra}{T1}{calligra}{m}{n}
\newcommand{\oracle}{\mathcal{O}}
\def\url@leostyle{%
  \@ifundefined{selectfont}{\def\UrlFont{\sf}}{\def\UrlFont{\small\ttfamily}}}
\author{Milan~Korda and Colin N. Jones%
\thanks{Milan~Korda and Colin~N.~Jones are with the Laboratoire d'Automatique , \'Ecole Polytechnique F\'ed\'erale de Lausanne, Switzerland. {\tt\footnotesize \{milan.korda, colin.jones\}@epfl.ch}.}
}
\title{\bf Stability and Performance Verification of Optimization-based Controllers}
\begin{document}

\maketitle


\markboth{IEEE Transactions on Automatic Control, January 2015, Submitted}%
{Shell \MakeLowercase{\textit{et al.}}: Bare Demo of IEEEtran.cls for Journals}

\begin{abstract}
This paper presents a method to verify closed-loop properties of optimization-based controllers for deterministic and stochastic constrained polynomial discrete-time dynamical systems. The closed-loop properties amenable to the proposed technique include global and local stability, performance with respect to a given cost function (both in a deterministic and stochastic setting) and the $\mathcal{L}_2$ gain. The method applies to a wide range of practical control problems: For instance, a dynamical controller (e.g., a PID) plus input saturation, model predictive control with state estimation, inexact model and soft constraints, or a general optimization-based controller where the underlying problem is solved with a fixed number of iterations of a first-order method are all amenable to the proposed approach.

The approach is based on the observation that the control input generated by an optimization-based controller satisfies the associated Karush-Kuhn-Tucker (KKT) conditions which, provided all data is polynomial, are a system of polynomial equalities and inequalities. The closed-loop properties can then be analyzed using sum-of-squares (SOS) programming.
\end{abstract}

\begin{center}
{\footnotesize \bf Keywords:} \footnotesize Optimization-based control, Sum-of-squares, Model predictive control, Output feedback, Nonlinear control, Stochastic control, Robust control, Discounted cost, $\mathcal{L}_2$ gain
\end{center}

\maketitle

\section{Introduction}


This paper presents a computational approach to analyze closed-loop properties of optimization-based controllers for constrained polynomial discrete-time dynamical systems. Throughout the paper we assume that we are given an optimization-based controller that at each time instance generates a control input by solving an optimization problem parametrized by a function of the past measurements of the controlled system's output, and we ask about closed-loop properties of this interconnection. This setting encompasses a wide range of control problems including the control of a polynomial dynamical system by a linear controller (e.g., a PID) with an input saturation, output feedback model predictive control with inexact model and soft constraints, or a general optimization-based controller where the underlying problem is solved approximately with a fixed number of iterations of a first-order\footnote{By a first order optimization method we mean a method using only function value and gradient information, e.g., the projected gradient method (see Section~\ref{sec:fom}).} optimization method. Importantly, the method verifies all KKT points; hence it can be used to verify closed-loop properties of optimization-based controllers where the underlying, possibly nonconvex, optimization problem is solved with a local method with guaranteed convergence to a KKT point only.

The closed-loop properties possible to analyze by the approach include: global stability and stability on a given subset, performance with respect to a discounted infinite-horizon cost (where we provide polynomial upper and lower bounds on the cost attained by the controller over a given set of initial conditions, both in a deterministic and a stochastic setting), the $\mathcal{L}_2$ gain from a given disturbance input to a given performance output (where we provide a numerical upper bound).


The main idea behind the presented approach is the observation that the KKT system associated to an optimization problem with polynomial data is a system of polynomial equalities and inequalities. Consequently, provided that suitable constraint qualification conditions hold (see, e.g.,~\cite{peterson1973}), the solution of this optimization problem satisfies a system of polynomial equalities and inequalities. Hence, the closed-loop evolution of a polynomial dynamical system controlled by an optimization-based controller solving at each time step an optimization problem with polynomial data can be seen as a difference inclusion where the successor state lies in a set defined by polynomial equalities and inequalities. This difference inclusion is then analyzed using sum-of-squares (SOS) techniques (see, e.g., \cite{lasserreBook,parrilo} for introduction to SOS programming).

The approach is based on the observation of Primbs~\cite{primbs} who noticed that the KKT system of a constrained linear-quadratic optimization problem is a set of polynomial equalities and inequalities and used the S-procedure (see, e.g., \cite{boyd_LMI}) to derive sufficient linear matrix inequality (LMI) conditions for a given linear MPC controller to be stabilizing. In this work we significantly extend the approach in terms of both the range of closed-loop properties analyzed and the range of practical problems amenable to the method. Indeed, our approach is applicable to general polynomial dynamical systems, both deterministic and stochastic, and allows the analysis not only of stability but also of various performance measures. The approach is not only applicable to an MPC controller with linear dynamics and a quadratic cost function as in~\cite{primbs} but also to a general optimization-based controller, where the optimization problem may not be solved exactly, encompassing all the above-mentioned control problems.

This work is a continuation of~\cite{korda_ACC_verif} where the approach was used to analyze the stability of optimization-based controllers where the optimization problem is solved approximately by a fixed number of iterations of a first order method. The results of~\cite{korda_ACC_verif} are summarized in Section~\ref{sec:fom} of this paper as one of the examples that fit in the presented framework.

The paper is organized as follows. Section~\ref{sec:SOSprog} gives a brief introduction to SOS programming. Section~\ref{sec:probSetup} states the problem to be solved. Section~\ref{sec:examples} presents a number of examples amenable to the proposed method. Section~\ref{sec:closedLoopAnalysis} presents the main verification results: Section~\ref{sec:stabGlobal} on global stability, Section~\ref{sec:stabLocal} on stability on a given subset, Section~\ref{sec:perfDet} on performance analysis in a deterministic setting, Section~\ref{sec:perfStoch} on performance analysis in a stochastic setting and Section~\ref{sec:perfRob} on the analysis of the $\mathcal{L}_2$ gain in a robust setting. Computational aspects are discussed in Section~\ref{sec:compAspects}. Numerical examples are in Section~\ref{sec:numEx} and some proofs are collected in the Appendix.

\section{Sum-of-squares programming}\label{sec:SOSprog}
Throughout the paper we will rely on sum-of-squares (SOS) programming, which allows us to optimize, in a convex way, over polynomials with nonnegativity constraints imposed over a set defined by polynomial equalities and inequalities (see, e.g., \cite{parrilo,lasserreBook} for more details on SOS programming). In particular we will often encounter optimization problems with constraints of the form 
\begin{equation}\label{eq:nonnegGen}
\mathcal{L}V (x) \ge 0 \quad \forall\, x \in \mathbf{K},
\end{equation}
where $V : \Rb^n \to \Rb $ is a polynomial, $\mathcal{L}$ a linear operator mapping polynomials to polynomials (e.g., a simple addition or a composition with a fixed function) and
\[
\mathbf{K} = \{x\in \mathbb{R}^n \mid g(x) \ge 0, \; h(x) = 0\},
\]
where the functions $g: \Rb^n \to \Rb^{n_g}$ and $h: \Rb^n \to \Rb^{n_h}$ are vector polynomials (i.e., each component is a polynomial). A \emph{sufficient} condition for~(\ref{eq:nonnegGen}) to be satisfied is
\begin{equation}\label{eq:sosGen}
\mathcal{L}V = \sigma_0 + \sum_{i=1}^{n_g}\sigma_i g_i + \sum_{i=1}^{n_h}p_i h_i,
\end{equation}
where $\sigma_0$ and $\sigma_i$, $i = 1,\ldots,n_g$, are SOS polynomials and $p_i$, $i=1,\ldots, n_h$, are arbitrary polynomials. A polynomial $\sigma$ is SOS if it can be written as
\begin{equation}\label{eq:sos_eq}
\sigma(x) = \beta(x)^\top \mathcal{Q}\beta(x), \quad \mathcal{Q} \succeq 0,
\end{equation}
where $\beta(x)$ is a vector of polynomials and $\mathcal{Q} \succeq 0$ signifies that $\mathcal{Q}$ is a positive semidefinite matrix. The condition~(\ref{eq:sos_eq}) trivially implies that $\sigma(x) \ge 0$ for all $x \in \Rb^n$. Importantly, the condition~(\ref{eq:sos_eq}) translates to a set of linear constraints and a positive semidefiniteness constraint and therefore is equivalent to a semidefinite programming (SDP) feasibility problem. In addition, the constraint~(\ref{eq:sosGen}) is affine in the coefficients of $V$, $\sigma$ and $p$; therefore~(\ref{eq:sosGen}) also translates to an SDP feasibility problem and, crucially, it is possible to \emph{optimize} over the coefficients of $V$ (as long as they are affinely parametrized in the decision variables) subject to the constraint~(\ref{eq:sosGen}) using semidefinite programming.

In the rest of the paper when we say that a constraint of the form~(\ref{eq:nonnegGen}) is replaced by sufficient SOS constraints, then we mean that~(\ref{eq:nonnegGen}) is replaced with~(\ref{eq:sosGen}).

In addition we will often encounter optimization problems with objective functions of the form
\begin{equation}\label{eq:objGen}
\mr{min} / \mr{max}\; \int_\mathbf{X} V(x)\, dx,
\end{equation}
where $V$ is a polynomial and $\mathbf{X}$ a simple set (e.g., a box). The objective function is linear in the coefficients of the polynomials $V$. Indeed, expressing $V(x) = \sum_{i=1}^{n_\beta} v_i \beta_i(x)$, where $(\beta_i)_{i=1}^{n_\beta}$ are fixed polynomial basis functions and $(v_i)_{i=1}^{n_\beta}$ the corresponding coefficients, we have
\[
\int_\mathbf{X} V(x)\,dx = \sum_{i=1}^{n_\beta} v_i \int_\mathbf{X} \beta_i(x)\, dx = \sum_{i=1}^{n_\beta} v_i m_i,
\]
where the moments $m_i := \int_\mathbf{X} \beta_i(x)\, dx$ can be precomputed (in a closed form for simple sets $\mathbf{X}$). We see that the objective~(\ref{eq:objGen}) is linear in the coefficients $(v_i)_{i=1}^{n_\beta}$ and hence optimization problems with objective~(\ref{eq:objGen}) subject to the constraint~(\ref{eq:nonnegGen}) enforced via the sufficient constraint~(\ref{eq:sosGen}) translate to an SDP.



\section{Problem statement}\label{sec:probSetup}
We consider the nonlinear discrete-time dynamical system
\begin{subequations}\label{eq:sys}
\begin{align}
x^+ &= f_x(x,u), \\
y &= f_y(x),
\end{align}
\end{subequations}
where $x \in \Rb^{n_x}$ is the state, $u \in \Rb^{n_u}$ the control input, $y \in \Rb^{n_y}$ the output, $x^+ \in \Rb^{n_x}$ the successor state, $f_x : \Rb^{n_x}\times \Rb^{n_u} \to \Rb^{n_x}$ a transition mapping and $f_y: \Rb^{n_x} \to \Rb^{n_y}$ an output mapping. We assume that each component of $f_x$ and $f_y$ is a multivariate polynomial in $(x,u)$ and $x$, respectively.

We assume that the system is controlled by a given set-valued controller
\begin{equation}\label{eq:cont}
u \in \kappa(\mathbf{K}_s),
\end{equation}
where $\kappa: \Rb^{n_\theta}\to \Rb^{n_u}$ is polynomial and
\begin{equation}\label{eq:Kz}
\mathbf{K}_s := \{ \theta \in \Rb^{n_\theta} \! \mid \!  \exists \lambda\in \mathbb{R}^{n_\lambda}\; \mr{s.t.} \; g(s,\theta,\lambda) \ge 0, h(s,\theta,\lambda   ) = 0\},
\end{equation}
where each component of the vector-valued functions $g:\Rb^{n_s}\times \Rb^{n_\theta} \times \Rb^{n_\lambda} \to \Rb^{n_g} $ and $h:\Rb^{n_s}\times \Rb^{n_\theta} \times \Rb^{n_\lambda} \to \Rb^{n_h} $ is a polynomial in the variables $(s,\theta,\lambda)$. The set $\mathbf{K}_s$ is parametrized by the output of a dynamical system
\begin{subequations}\label{eq:zsys}
\begin{align}
z^+ &= f_z(z,y), \\
s &= f_s(z,y),
\end{align}
\end{subequations}
where  $f_z: \Rb^{n_z} \times \Rb^{n_y} \to  \Rb^{n_z}$ and $f_s: \Rb^{n_z} \times \Rb^{n_y} \to  \Rb^{n_s}$ are polynomial. The problem setup is depicted in Figure~\ref{fig:probSetup}. In the rest of the paper we develop a method to analyze the closed-loop stability and performance of this interconnection. Before doing that we present several examples which fall into the presented framework.

\begin{figure}[htb]
	\begin{picture}(50,45)
	\put(35,3){\includegraphics[width=80mm]{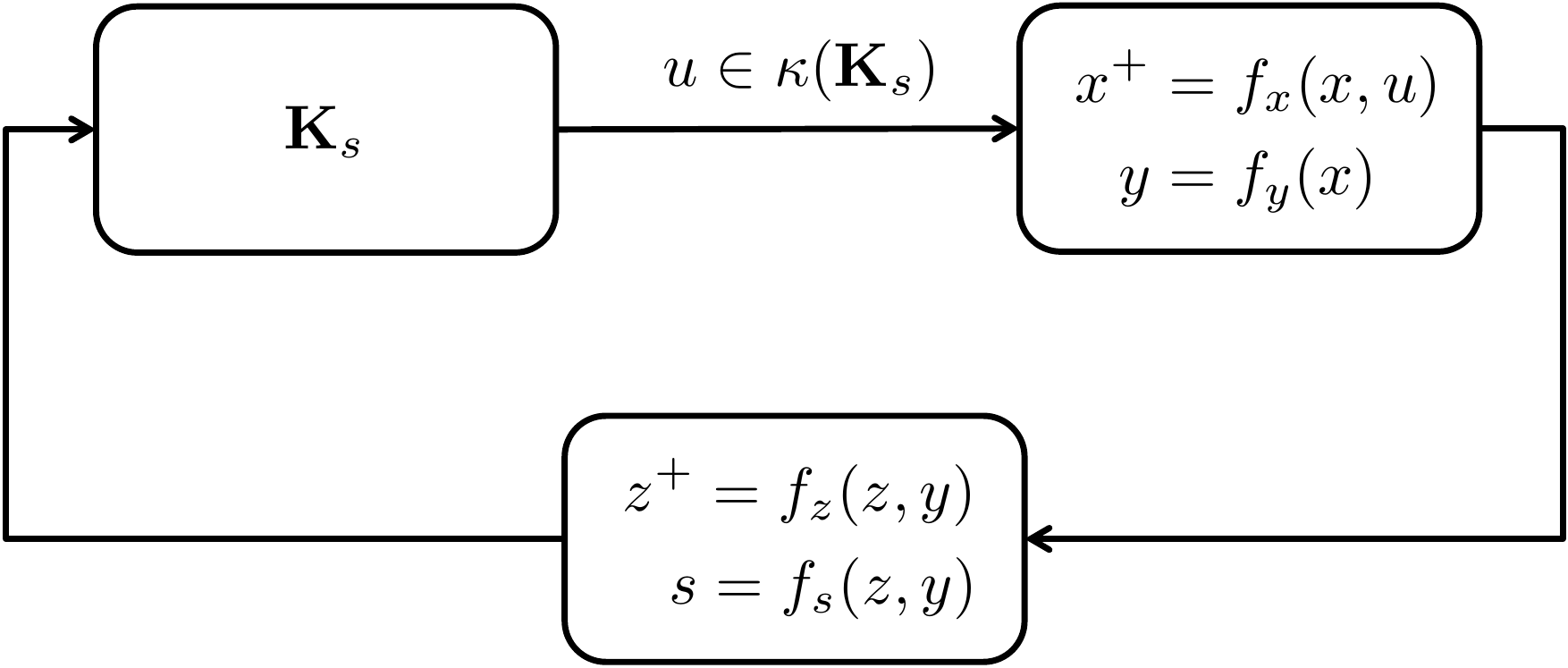}}
	\end{picture}
	\caption{\small Control scheme}
	\label{fig:probSetup}
\end{figure}

\section{Examples}\label{sec:examples}
The framework considered allows for the analysis of a large number of practical control scenarios. The common idea is to write the control input $u$ as the output of an optimization problem with polynomial data parametrized by the state of the dynamical system~(\ref{eq:zsys}). The control input $u$ then belongs to the associated KKT system (provided that mild regularity conditions are satisfied) which is of the form~(\ref{eq:Kz}).

\subsection{Polynomial dynamical controller + input saturation}\label{sec:inputSat}
Any polynomial dynamical controller (e.g., a PID controller) plus an input saturation can be written in the presented form provided that the input constraint set is defined by finitely many polynomial inequalities satisfying mild constraint qualification conditions (see, e.g., \cite{peterson1973}). Indeed, regarding $z$ as the state and $s$ as the output of the controller and generating $u$ according to $u \in \mathrm{proj}_{U}(s)$, where
\begin{equation}\label{eq:proj}
\mathrm{proj}_{U}(s) = \argmin_{\theta \in U}  \frac{1}{2}\|  \theta - s \|^2_2 
\end{equation}
is the set of Euclidean projections of $s$ on the constraint set $U$ (there can be multiple projections since $U$ is in general nonconvex). Assuming that the input constraint set is of the form
\begin{equation}\label{eq:inputConst}
U = \{ v \in \Rb^{n_u} \mid g_U(v) \ge 0\},
\end{equation}
where $g_U:\Rb^{n_u} \to \Rb^{n_{g_U}}$ has polynomial entries, the KKT conditions associated to the optimization problem~(\ref{eq:proj}) read
\begin{subequations}\label{eq:KKTsys_inputSatur}
\begin{align}
\theta- s -  \nabla g_U(\theta)\lambda &= 0 \\
\lambda ^\top g_U(\theta) &= 0 \\
\lambda &\ge 0 \\
g_U(\theta) &\ge 0,
\end{align}
\end{subequations}
where $\lambda \in \Rb^{n_u}$ is the vector of Lagrange multipliers associated with the constraints defining $U$ and $\nabla g_U$ is the transpose of the Jacobian of $g_U$ (i.e., $[\nabla g_U]_{i,j} = \frac{\partial [g_U]_j}{\partial x_i}$). Assuming that constraint qualification conditions hold such that any minimizer of~(\ref{eq:proj}) satisfies the KKT conditions~(\ref{eq:KKTsys_inputSatur}) we conclude that
\[
u  \in \kappa(\mathbf{K}_s)
\]
with $\kappa$ being the identity (i.e., $\kappa(\theta) = \theta$),
 \[
h(s,\theta,\lambda) =
\begin{bmatrix}
 \theta- s -  \nabla g_U(\theta) \lambda  \\  \lambda^\top g_U(\theta) 
\end{bmatrix}
\]
and
\[
g(s,\theta,\lambda) = \begin{bmatrix} 
\lambda \\ g_U(\theta)
\end{bmatrix},
\]
where $h$ and $g$ are polynomials in $(s,\theta,\lambda)$ as required.

Note that the description of the input constraint set~(\ref{eq:inputConst}) is not unique. For example, if the input constraint set is $[-1,1]$, then the function $g_U$ can be
\begin{equation}\label{eq:desc_gU1}
g_U(\theta) = (1-\theta)(1+\theta)
\end{equation}
or
\begin{equation}\label{eq:desc_gU2}
g_U(\theta) = \begin{bmatrix} 1-\theta \\ 1+\theta  \end{bmatrix}
\end{equation}
or any odd powers of the above. Depending on the particular description, the constraint qualification conditions may or may not hold. It is therefore important to choose a suitable description of $U$ which is both simple and such that the constraint qualification conditions hold. We remark that in the case of $U = [-1,1]$ both~(\ref{eq:desc_gU1}) and (\ref{eq:desc_gU2}) satisfy these requirements.

Note also that the KKT system~(\ref{eq:KKTsys_inputSatur}) may be satisfied by points which are not global minimizers of~(\ref{eq:proj}) if the set $U$ is nonconvex; this is an artefact of the presented method and cannot be avoided within the presented framework. We note, however, that the input constraint set $U$ is in most practical cases convex (note that the concavity of the components of $g_U$ is not required; what matters is the convexity of the set $U$ defined by $g_U$~\cite{lasserre_convexRepresentation}).

\subsection{Output feedback nonlinear MPC with model mismatch and soft constraints}
This example shows how to model within the presented framework a nonlinear MPC controller with state estimation, a model mismatch\footnote{In this example we assume that we know the true model of the system but in the MPC controller we intentionally use a different model (e.g., we use a linearized or otherwise simplified model for the sake of computation speed); the true model is used only to \emph{verify} closed-loop properties of the true model controlled by the MPC controller. See Section~\ref{sec:perfRob} for the case where the true model is not known exactly even for the verification purposes and the model mismatch is captured by an exogenous disturbance.}, soft constraints and no a priori stability guarantees (enforced, e.g., using a terminal penalty and/or terminal set) and possibly only locally optimal solutions delivered by the optimization algorithm. In this case the system~(\ref{eq:zsys}) is an estimator of the state of the dynamical system~(\ref{eq:sys}) and in each time step the following optimization problem is solved
\begin{equation}
\label{opt:mpc}
\begin{array}{ll}
\underset{\boldsymbol{ \hat{u} }, \boldsymbol{ \hat{x},\boldsymbol{\varepsilon} } }{\mbox{minimize}} & l_\mr{s}(\boldsymbol{\varepsilon})  + \sum_{i=0}^{N-1} l_i(s,\hat{x}_i,\hat{u}_i) + l_N(s,x_N)\\
\mbox{subject to} & \hat{x}_{i+1} = \hat{f}(\hat{x}_i,\hat{u}_i), \quad i = 0,\ldots, N-1 \\
& a(s,\boldsymbol{\hat{x}}, \boldsymbol{\hat{u}},\boldsymbol \varepsilon )\ge 0 \\
& b(s,\boldsymbol{\hat{x}}, \boldsymbol{\hat{u}}) = 0 ,
\end{array}
\end{equation}
where $\boldsymbol{\hat{x}} = (\hat{x}_0,\ldots, \hat{x}_N)$, $\boldsymbol{\hat{u}} = (\hat{u}_0,\ldots, \hat{u}_{N-1})$, $\boldsymbol{\varepsilon} = (\varepsilon_1,\ldots,\varepsilon_{n_\varepsilon})$ are slack variables for the inequality constraints, $ \hat{f}$ is a polynomial model of the true transition mapping $f_x$, $l_\mr{s}$ is a polynomial penalty for violations of the inequality constraints, $l_i$, $i=0,\ldots,N$, are polynomial stage costs and $a$ and $b$ (vector) polynomial constraints parametrized by the state estimate $s$ produced by~(\ref{eq:zsys}). If the dimension of the state estimate $s$ is equal to the dimension of the state of the model $\hat{x}$ (which we do not require), then most MPC formulations will impose $\hat{x}_0 = s$, which is encoded by making one of the components of $b$ equal to $\hat{x}_0 - s$. The formulation~(\ref{opt:mpc}) is, however, not restricted to this scenario and allows arbitrary dependence of the constraints (along the whole prediction horizon) on the state estimate $s$. The control input applied to the system is then $u = \hat{u}_0^\star$, where $\hat{u}_0^\star$ is the first component of any vector $\boldsymbol{\hat{u}^\star}$ optimal in~(\ref{opt:mpc}).

The KKT system associated to~(\ref{opt:mpc}) reads
\begin{subequations}\label{eq:KKTmpc}
\begin{align}
 \nabla_{\hat{\boldsymbol{x}},\hat{\boldsymbol{u}},\bs{\varepsilon}} \mathcal{L}(s,\bs{\hat{x}},\bs{\hat{u}},\bs{\varepsilon},\lambda) & = 0\\
\lambda_a ^\top a(s,\boldsymbol{\hat{x}}, \boldsymbol{\hat{u}},\bs{\varepsilon}) &= 0 \\
b(s,\boldsymbol{\hat{x}}, \boldsymbol{\hat{u}}) &= 0\\
\hat{x}_{i+1} - \hat{f}(\hat{x}_i,\hat{u}_i) & = 0, \quad i = 0,\ldots, N-1
\\
\lambda_a &\ge 0 \\
a(s,\boldsymbol{\hat{x}}, \boldsymbol{\hat{u}}, \bs{\varepsilon}) &\ge 0, 
\end{align}
\end{subequations}
where $\lambda := (\lambda_a,\lambda_b,\lambda_{\hat{f}}^0,\ldots,\lambda_{\hat{f}}^{N-1})$ and
\begin{align*}
 \mathcal{L}(s,\hat{x},\hat{u},\bs{\varepsilon},\lambda) & \!:= l_\mr{s}(\bs{\varepsilon})+ \!\! \sum_{i=0}^{N-1}   l_i(s,\hat{x}_i,\hat{u}_i) - \lambda_{a}^\top( a(s,\boldsymbol{\hat{x}}, \boldsymbol{\hat{u}} , \bs{\varepsilon})  )  \\ 
& \hspace{-1.5cm} + l_N(s,x_N) +\lambda_{b}^\top b(s,\boldsymbol{\hat{x}}, \boldsymbol{\hat{u}}) + \sum_{i=0}^{N-1}\lambda_{\hat{f}}^{i}(\hat{x}_{i+1} - \hat{f}(\hat{x}_i,\hat{u}_i)) 
\end{align*}
is the Lagrangian of~(\ref{opt:mpc}). The KKT system~(\ref{eq:KKTmpc}) is a system of polynomial equalities and inequalities. Consequently, setting 
\[
\theta := (\boldsymbol{\hat{u}},\boldsymbol{\hat{x}},\bs{\varepsilon}), \quad\kappa(\theta) = \hat{u}_0
\]
and assuming that constraint qualification conditions hold such that every optimal solution to~(\ref{opt:mpc}) satisfies the KKT condition~(\ref{eq:KKTmpc}), there exist polynomial functions $h(s,\theta,\lambda) $ and $g(s,\theta,\lambda) $ such that $\hat{u}_0^\star \in \kappa (\mathbf{K}_s)$ for every $\hat{u}_0^\star$ optimal in~(\ref{opt:mpc}).

\begin{remark}\label{rem:localSolutions} Let us mention that, provided suitable constraint qualification conditions hold, not only every globally optimal $\hat{u}_0^\star$ will satisfy the KKT system but also every \emph{locally optimal} solution to~(\ref{opt:mpc}) and every critical point of~(\ref{opt:mpc}) will; hence the proposed method can be used to verify stability and performance properties even if only local solutions to~(\ref{opt:mpc}) are delivered by the optimization algorithm.
\end{remark}

\begin{remark}\label{rem:inexactSol}
Note that the situation where the optimization problem~(\ref{opt:mpc}) is not solved exactly can be handled as well. One way to do so is to include an auxiliary variable $\delta$ capturing the inaccuracy in the solution, either in the satisfaction of the KKT system~(\ref{eq:KKTmpc}) or directly as an error on the delivered control action $\hat{u}_0$ (e.g., defining $\kappa(\theta) = \hat{u}_0(1+\delta)$ with $\theta = (\boldsymbol{\hat{u}},\boldsymbol{\hat{x}},\bs{\varepsilon},\delta)$) and imposing $|\delta| \le \Delta $, where $\Delta > 0$ is a known bound on the solution accuracy. If the solution inaccuracy is due to a premature termination of a first-order optimization method used to solve~(\ref{opt:mpc}), a more refined analysis can be carried out within the presented framework; this is detailed in Section~\ref{sec:fom}.
\end{remark}

\subsection{General optimization-based controller}\label{sec:generalOBC}
Clearly, there was nothing specific about the MPC structure of the optimization problem solved in the previous example and therefore the presented framework can be used to analyze arbitrary optimization-based controllers which at each time step solve an optimization problem parametrized by the output of the dynamical system~(\ref{eq:zsys}):
\begin{equation}
\label{opt:gen}
\begin{array}{ll}
\underset{\theta\in \Rb^{n_\theta}}{\mbox{minimize}} & J(s,\theta)\\
\mbox{subject to} & a(s,\theta) \ge 0 \\
& b(s,\theta) = 0 ,
\end{array}
\end{equation}
where $J$ is a polynomial and $a$ and $b$ polynomial vectors. The associated KKT system reads
\begin{subequations}\label{eq:KKTgen}
\begin{align}
 \nabla_\theta J(s,\theta)  - \nabla_\theta a(s,\theta) \lambda_{a} + \nabla_\theta b(s,\theta) \lambda_{b} &= 0 \\
\lambda_a ^\top a(s,\theta) &= 0 \\
\lambda_a &\ge 0 \\
a(s,\theta) &\ge 0 \\
b(s,\theta) &= 0,
\end{align}
\end{subequations}
which is a system of polynomial equalities and inequalities in $(s,\theta,\lambda)$, where  $\lambda = (\lambda_a,\lambda_b)$ and hence can be treated within the given framework. In particular the functions $h$ and $g$ defining the set $\mathbf{K}_s$ read
\[
h(s,\theta,\lambda) \! = \!
\begin{bmatrix} 
\nabla_\theta J(s,\theta)  - \nabla_\theta a(s,\theta) \lambda_{a} + \nabla_\theta b(s,\theta) \lambda_{b} \\ 
\lambda_a ^\top a(s,\theta)\\
b(s,\theta)
\end{bmatrix}
\]
and
\[
g(s,\theta,\lambda) = 
\begin{bmatrix} 
\lambda_a\\
a(s,\theta)
\end{bmatrix}.
\]

See Remark~\ref{rem:inexactSol} for the situation where the problem~(\ref{opt:gen}) is not solved exactly.

\subsection{Optimization-based controller solved using a fixed number of iterations of a first order method}\label{sec:fom}
The presented framework can also handle the situation where the optimization problem~(\ref{opt:gen}) is solved using an iterative optimization method, each step of which is either an optimization problem or a polynomial mapping. This scenario was elaborated on in detail in~\cite{korda_ACC_verif}, where it was shown that the vast majority of first order optimization methods fall into this category. Here we present the basic idea on one of the simplest optimization algorithms, the projected gradient method. When applied to problem~(\ref{opt:gen}), the iterates of the projected gradient method are given by
\begin{equation}\label{eq:gradStep}
\theta_{k+1} \in  \mr{proj}_{\mathcal{S}}(\theta_k - \eta \nabla_\theta J(s,\theta_k) ),
\end{equation}
where $\mr{proj}_{\mathcal{S}}(\cdot)$ denotes the set of Euclidean projections on the constraint set
\[
\mathcal{S} = \{\theta\in\mathbb{R}^{n_\theta} \mid a(s,\theta) \ge 0, \: b(s,\theta) = 0\}
\] 
and $\eta > 0$ is a step size. The update formula~(\ref{eq:gradStep}) can be decomposed into two steps: step in the direction of the negative gradient and projection on the constraint set. The first step is a polynomial mapping and the second step is an optimization problem. Indeed, equation~(\ref{eq:gradStep}) can be equivalently written as
\begin{equation}\label{opt_grad}
\begin{array}{rclll}
 \theta_{k+1} &\in& \underset{\;\,\theta\in\mathbb{R}^{n_\theta}}{\mathrm{arg\,min}} &  \frac{1}{2}\| \theta - (\theta_k - \eta\nabla_\theta{J(s,\theta_k)}) \|^2_2 \vspace{1.2mm}\\
&&\hspace{0.6cm} \mathrm{s.t.} & a(s,\theta) \ge 0 \vspace{0.4mm}\\
&&& b(s,\theta) = 0.
\end{array}
\end{equation}
For each $k \in\{ 0,1,\ldots\}$, the KKT system associated to~(\ref{opt_grad}) reads
\begin{subequations}\label{eq:KKT_firstOrder}
\begin{align}
 \nonumber \theta_{k+1} - (\theta_k - \eta\nabla_\theta{J(s,\theta_k)}) & - \nabla_\theta\, a(s,\theta_{k+1})  \lambda_a^{k+1} \\  + \nabla_\theta\, b(s,\theta_{k+1})  \lambda_b^{k+1} & = 0 \\
 b(s,\theta_{k+1}) &= 0 \label{eq:KKT_FO_stat}\\
 a(s,\theta_{k+1})^\top \lambda_a^{k+1} & = 0\\
 a(s,\theta_{k+1})  & \ge 0\\
 \lambda_a^{k+1}  & \ge 0, 
\end{align}
\end{subequations}
which is a system of polynomial equalities and inequalities. Note in particular the coupling between $\theta_k$ and $\theta_{k+1}$ in equation~(\ref{eq:KKT_FO_stat}). Assuming we apply $M$ steps of the projected gradient method, the last iterated $\theta_M$ is therefore characterized by $M$ coupled KKT systems of the form~(\ref{eq:KKT_firstOrder}), which is a system of polynomial equalities and inequalities as required by the proposed method.

Other optimization methods, in particular most of the first order methods (e.g., fast gradient method~\cite{nesterov_book}, AMA~\cite{tseng_ama}, ADMM~\cite{gabay1976_admm} and their accelerated versions~\cite{goldstein2014fast}), including local non-convex methods (e.g., \cite{jeanNonconvex}), and some of the second order methods (e.g., the interior point method with exact line search) are readily formulated in this framework as well; see~\cite{korda_ACC_verif} for more details on first-order methods.

\section{Closed-loop analysis}\label{sec:closedLoopAnalysis}
In this section we describe a method to analyze closed-loop properties of the interconnection depicted in Figure~\ref{fig:probSetup} and described in Section~\ref{sec:probSetup}. First, notice that the closed-loop evolution is governed by the difference inclusion
\begin{subequations}\label{eq:difIncl}
\begin{align}
x^+ &\in f_x(x,\kappa(\mathbf{K}_s)), \\
z^+ &= f_z(z,f_y(x)).
\end{align}
\end{subequations}
Since all problem data is polynomial and the set $\mathbf{K}_s$ is defined by polynomial equalities and inequalities it is possible to analyze stability and performance using sum-of-squares (SOS) programming. This is detailed next.

We will use the following notation:
\begin{subequations}\label{eq:hatFunctions}
\begin{align}
\hat{h}(x,z,\theta,\lambda) & := h(f_s(z,f_y(x)),\theta,\lambda  )\\
\hat{g}(x,z,\theta,\lambda) & := g(f_s(z,f_y(x)),\theta,\lambda  ).
\end{align}
\end{subequations}

For the rest of the paper we impose the following standing assumption:
\begin{assumption}\label{as:nonempty}
The set $\mathbf{K}_s$ is nonempty for all $s \in \Rb^{n_s}$.
\end{assumption}
Assumption~\ref{as:nonempty} implies that the control input~(\ref{eq:cont}) is well defined for all $s \in \Rb^{n_s}$.

\subsection{Stability analysis -- global}\label{sec:stabGlobal}
A sufficient condition for the state $x$ of the difference inclusion~(\ref{eq:difIncl}) to be stable is the existence of a function $V$ satisfying
\begin{subequations}\label{eq:Lyap}
\begin{align}
V(x^+,z^+,\theta^+,\lambda^+) - V(x,z,\theta,\lambda) & \le -\| x\|^2_2 \label{eq:LyapDec} \\
V(x,z,\theta,\lambda) \ge \|x \|^2_2 \label{eq:LyapNonNeg}
\end{align}
\end{subequations}
for all
\[ 
(x,z,\theta,\lambda,x^+,z^+,\theta^+,\lambda^+) \in \mathbf{K},
\]
where 
\begin{align} \label{eq:K}
&\mathbf{K} = \{ (x,z,\theta,\lambda,x^+,z^+,\theta^+,\lambda^+) \mid x^+ = f_x(x,\kappa(\theta)), \nonumber \\
&\hat{h}(x,z,\theta,\lambda) = 0, \; \hat{g}(x,z,\theta,\lambda) \ge 0, \; \hat{h}(x^+,z^+,\theta^+,\lambda^+) = 0, \nonumber \\
&\hat{g}(x^+,z^+,\theta^+,\lambda^+) \ge 0, \; z^+ = f_z(z,f_y(x))
  \}.
\end{align}

These equations require that a Lyapunov function $V$ exists which decreases on the basic semialgebraic set $\mathbf{K}$ implicitly characterizing the closed-loop evolution~(\ref{eq:difIncl}). Therefore, we can tractably seek a Lyapunov function for system~~(\ref{eq:difIncl}) by restricting $V$ to be a polynomial of a pre-defined degree and replacing the inequalities~(\ref{eq:Lyap}) by sufficient SOS conditions according to Section~\ref{sec:SOSprog}. For better understanding we detail this replacement here and refer to the general treatment in Section~\ref{sec:SOSprog} in the rest of the paper.

 Setting
\[
 \xi := (x,z,\theta,\lambda,x^+,z^+,\theta^+,\lambda^+),
\]
 these SOS conditions read
\begin{subequations}\label{eq:sos_eqs}
\begin{align}\label{eq:sos_decrease}
V(x,z,\theta,\lambda) - V(x^+,z^+,\theta^+,\lambda^+)  - \| x \| ^2_2 = \\\nonumber   &\hspace{-6cm}\sigma_0(\xi) + \sigma_1(\xi)^\top \hat{g}(x,z,\theta,\lambda) + \sigma_2(\xi )^\top \hat{g}(x^+,z^+,\theta^+,\lambda^+)\\\nonumber & \hspace{-6cm}+ p_1(\xi)^\top \hat{h}(x,z,\theta,\lambda) + p_2(\xi)^\top  \hat{h}(x^+,z^+,\theta^+,\lambda^+)\\ \nonumber &\hspace{-6cm} + p_3(\xi)(x^+ - f_x(x,\kappa(\theta))
+ p_4(\xi) (z^+ - f_z(z,f_y(x)), \\ 
\label{eq:sos_nonneg}
&\hspace{-6.4cm} V(x,z,\theta, \lambda) - \| x \| ^2_2  = \\\nonumber   &\hspace{-6cm}\bar{\sigma}_0(\xi) + \bar{\sigma}_1(\xi)^\top \hat{g}(x,z,\theta,\lambda) + \bar{p}_1(\xi)^\top \hat{h}(x,z,\theta,\lambda), 
\end{align}
\end{subequations}
where $\sigma_i(\xi)$ and $\bar{\sigma}_i(\xi)$ are SOS multipliers and $p_i(\xi)$ and $\bar{p}_i(\xi)$ polynomial multipliers of compatible dimensions and pre-specified degrees (selection of the degrees is discussed in Section~\ref{sec:compAspects}). The satisfaction of~(\ref{eq:sos_decrease}) implies the satisfaction of~(\ref{eq:LyapDec}) and the satisfaction of~(\ref{eq:sos_nonneg}) implies the satisfaction of~(\ref{eq:LyapNonNeg}) for all $\xi \in \mathbf{K}$; this follows readily since $\sigma_i$ and $\bar{\sigma}_i$ are globally nonnegative, $\hat{g}$ is nonnegative on $\bf K$ and $\hat{h}$ is zero on $\bf K$ (see Section~\ref{sec:SOSprog}).

\begin{remark}\label{rem:compos}
Note that instead of including the equalities $x^+ - f_x(x,\kappa(\theta))$ and $z^+ - f_z(z,f_y(x))$ in the description of $\bf K$ we could also directly substitute for $x^+$ and $z^+$. In general, direct substitution is preferred if the mappings $f_x$, $f_z$ and $f_y$ are of low degree, especially linear, in which case there is no increase in the degree of the composition of $V$ with $f_x$ or with $f_z$ and $f_y$. Otherwise, the formulation~(\ref{eq:sos_eqs}) is preferred.
\end{remark}

\begin{remark}\label{rem:other_vars}
Note that the Lyapunov function $V$ is allowed to depend not only on the state variables $x$ and $z$ but also on the variables $\theta$ and $\lambda$ describing the semialgebraic set $\mathbf{K}_s$. This added flexibility implies that a larger class of functions is spanned after projecting on the state variables $(x,z)$. In particular, if the variables $\theta$ and $\lambda$ are respectively the decision variables and Lagrange multipliers associated to a KKT system of an optimization problem, then the class of functions spanned includes the value function of this optimization problem which is typically \emph{not polynomial} (for example, this value function is \emph{piecewise} quadratic in the case of a standard linear MPC with quadratic cost and polytopic constraints).
\end{remark}

From the previous discussion we conclude that closed-loop stability of the state $x$ of~(\ref{eq:difIncl}) is implied by the feasibility of the following SOS problem:
\begin{equation}\label{opt:sos}
\begin{array}{rclll}
& \mathrm{find} & \displaystyle V,\sigma_0,\sigma_1,\sigma_2,p_1,p_2,p_3,p_4,\bar{\sigma}_0,\bar{\sigma}_1,\bar{p}_1 \vspace{1.2mm}\\
& \mathrm{s.t.} & (\ref{eq:sos_decrease}), (\ref{eq:sos_nonneg}) \vspace{0.4mm}\\
&& \sigma_0,\sigma_1,\sigma_2, \bar{\sigma}_0,\bar{\sigma}_1 & \hspace{-2.1cm}\text{SOS polynomials}\\
&& V,p_1, p_2,p_3,p_4, \bar{p}_1 &\hspace{-2.1cm}\text{arbitrary polynomials},
\end{array}
\end{equation}
where the decision variables are the coefficients of the polynomials \[(V,\sigma_0,\sigma_1,\sigma_2,p_1,p_2,p_3,p_4,\bar{\sigma}_0,\bar{\sigma}_1,\bar{p}_1).\]

The following theorem summarizes the results of this section.
\begin{theorem}
If the problem~(\ref{opt:sos}) is feasible, then the state $x$ of the closed-loop system~(\ref{eq:difIncl}) is globally asymptotically stable.
\end{theorem}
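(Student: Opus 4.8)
The plan is to use the polynomial $V$ furnished by a feasible point of~(\ref{opt:sos}) as a Lyapunov certificate along closed-loop trajectories, obtain global attractivity of the $x$-component by a telescoping argument, and then address Lyapunov stability. First I would invoke the implication recorded just below~(\ref{eq:sos_eqs}): feasibility of~(\ref{opt:sos}) produces a polynomial $V$ for which the decrease inequality~(\ref{eq:LyapDec}) and the lower bound~(\ref{eq:LyapNonNeg}) hold at every point of the transition set $\mathbf{K}$ in~(\ref{eq:K}). The essential observation is that any closed-loop trajectory of~(\ref{eq:difIncl}) -- that is, any sequence $(x_k,z_k,\theta_k,\lambda_k)_{k\ge 0}$ obtained by selecting at each step some $(\theta_k,\lambda_k)$ with $\hat{h}=0,\ \hat{g}\ge 0$ and propagating $x_{k+1}=f_x(x_k,\kappa(\theta_k))$ and $z_{k+1}=f_z(z_k,f_y(x_k))$ -- has the property that every consecutive octuple $(x_k,z_k,\theta_k,\lambda_k,x_{k+1},z_{k+1},\theta_{k+1},\lambda_{k+1})$ lies in $\mathbf{K}$. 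Hence~(\ref{eq:LyapDec}) and~(\ref{eq:LyapNonNeg}) may be applied at each step of any trajectory, irrespective of which KKT point the set-valued controller happens to select; this is what makes the argument robust to the multi-valuedness of $\kappa(\mathbf{K}_s)$.

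Next I would carry out the telescoping estimate, which is the clean part. Writing $V_k:=V(x_k,z_k,\theta_k,\lambda_k)$, inequality~(\ref{eq:LyapDec}) gives $V_{k+1}-V_k\le -\|x_k\|_2^2\le 0$, so $(V_k)$ is nonincreasing, while~(\ref{eq:LyapNonNeg}) gives $V_k\ge \|x_k\|_2^2\ge 0$, so $(V_k)$ is bounded below. Summing the decrease over $k=0,\dots,N-1$ yields $\sum_{k=0}^{N-1}\|x_k\|_2^2\le V_0-V_N\le V_0$, and letting $N\to\infty$ gives $\sum_{k=0}^{\infty}\|x_k\|_2^2\le V_0<\infty$. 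Consequently $\|x_k\|_2\to 0$, which is global attractivity of the origin for the $x$-component. The same bounds give the a priori transient estimate $\|x_k\|_2^2\le V_k\le V_0$ for all $k$, so every $x$-trajectory is bounded by its initial energy $V_0$.

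The hard part will be the Lyapunov-stability half of asymptotic stability, and this is the step I expect to be the main obstacle. The bound available from~(\ref{eq:LyapNonNeg}) is one-sided: it controls $V$ from below by $\|x\|_2^2$, but there is no matching upper bound of $V$ by a function of $\|x\|_2$ alone, precisely because $V$ is permitted to depend on $z,\theta,\lambda$ (the extra freedom highlighted in Remark~\ref{rem:other_vars}). Thus from $\|x_k\|_2^2\le V_0$ one cannot immediately infer that $\|x_k\|_2$ is small when $\|x_0\|_2$ is small, since $V_0=V(x_0,z_0,\theta_0,\lambda_0)$ carries contributions from the controller state and the selected multipliers. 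I would attempt to close this by exploiting continuity of the polynomial $V$ together with the initialization: with the controller started at its equilibrium, $s_0=f_s(z_0,f_y(x_0))$ depends continuously on $x_0$, and as $x_0\to 0$ any selected $(\theta_0,\lambda_0)\in\mathbf{K}_{s_0}$ is driven toward the equilibrium KKT point, so that $V_0\to V_\star$, the value of $V$ at the equilibrium extended state; combining an $\varepsilon$--$\delta$ estimate on $V_0$ with $\|x_k\|_2^2\le V_0$ would then yield stability, and with the attractivity above this gives global asymptotic stability of $x$. The delicate ingredients here are the (semi)continuity of the selection $(\theta_0,\lambda_0)$ in $x_0$ through the possibly multi-valued map $\mathbf{K}_s$, and the vanishing $V_\star=0$ at equilibrium; absent these, the certificate still guarantees global attractivity of $x$ together with the explicit transient bound $\sup_k\|x_k\|_2\le\sqrt{V_0}$.
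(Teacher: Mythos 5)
Your telescoping argument is precisely the argument the paper intends: the paper gives no separate proof of this theorem (it ``summarizes the results of this section''), the implicit justification being exactly that feasibility of~(\ref{opt:sos}) yields a polynomial $V$ satisfying~(\ref{eq:LyapDec})--(\ref{eq:LyapNonNeg}) on $\mathbf{K}$, that every consecutive octuple $(x_k,z_k,\theta_k,\lambda_k,x_{k+1},z_{k+1},\theta_{k+1},\lambda_{k+1})$ of a closed-loop trajectory lies in $\mathbf{K}$ irrespective of which KKT point the set-valued controller selects (Assumption~\ref{as:nonempty} guaranteeing such points exist at every step), and that summing the decrease gives $\sum_{k}\|x_k\|_2^2 \le V_0 < \infty$, hence $x_k \to 0$ from every initial condition. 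So the attractivity half of your proposal is correct and coincides with the paper's route.

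The difficulty you flag in your last paragraph is genuine, but it is a weakness of the theorem's wording rather than of your argument, and the repair you sketch should not be pursued: the map $s \mapsto \mathbf{K}_s$ is set-valued and in general not continuous, the multipliers $\lambda$ need not remain bounded near the equilibrium, and nothing in~(\ref{eq:Lyap}) forces $V$ to vanish at the equilibrium extended state --- (\ref{eq:LyapNonNeg}) is only a lower bound, so even a $V$ depending on $(x,z)$ alone may have $V(0,0)>0$, in which case $\sup_k \|x_k\|_2 \le \sqrt{V_0}$ yields no $\varepsilon$--$\delta$ estimate. What the certificate actually delivers is global attractivity of the $x$-component plus the transient bound $\|x_k\|_2^2 \le V_0$; this is also all that the paper's local counterpart (Theorem~\ref{thm:loca}) claims ($\lim_{t\to\infty} x_t = 0$), which indicates the authors use ``asymptotically stable'' in the sense of attractivity. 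Had strict Lyapunov stability been intended, an additional constraint would have to be appended to~(\ref{opt:sos}), e.g., an upper bound $V(x,z,\theta,\lambda) \le \beta(\|(x,z)\|_2)$ with $\beta$ of class $\mathcal{K}_\infty$, enforced on the same constraint set as~(\ref{eq:LyapNonNeg}); this also quantifies the role of the extra variables $(\theta,\lambda)$ admitted in Remark~\ref{rem:other_vars}. In short: keep your telescoping proof, delete the continuity patch, and state the conclusion as global attractivity together with the explicit transient bound --- that is exactly what the paper's reasoning establishes.
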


\subsection{Stability analysis -- on a given subset}\label{sec:stabLocal}
This section addresses the stability analysis on a given subset $\mathbf{X}$ of the state space $\Rb^{n_x} \times \Rb^{n_z}$ of the difference inclusion~(\ref{eq:difIncl}). The set $\mathbf{X}$ serves to restrict the search for a stability certificate to a given subset of the state space if global stability cannot be proven, as well as it can be used to encode physical constraints on the states of the original system $x$ and/or known relationships between the states $x$ and $z$ (e.g., if $z$ is an estimate of $x$ and a bound on the estimation error is known).

We assume that the set $\mathbf{X}$ is defined as
\begin{equation}\label{eq:Xdef}
\mathbf{X} := \{(x,z)\in \mathbb{R}^{n_x+n_z} \mid \psi_i(x,z) \ge 0, \; i=1,\ldots, n_\psi \}.
\end{equation}
where $\psi_i(\cdot)$ are polynomials. The Lyapunov conditions~(\ref{eq:Lyap}) are then enforced on the intersection of $\mathbf{X}$ with the set $\mathbf{K}$ (defined in~(\ref{eq:K})), i.e., on the set
\begin{align*}
\bar{\mathbf{K}}  := \{  & (x,z,\theta,\lambda,x^+,z^+,\theta^+,\lambda^+)   \mid \\ &   (x,z,\theta,\lambda,x^+,z^+,\theta^+,\lambda^+) \in \mathbf{K}, \;  (x,z)\in\mathbf{X} \},
\end{align*}
which is a set defined by finitely many polynomial equalities and inequalities. Hence the problem of stability verification on $\mathbf{X}$ leads to an SOS problem completely analogous to~(\ref{opt:sos}).



However, the pitfall here is that the satisfaction of Lyapunov conditions~(\ref{eq:Lyap}) (with $\mathbf{K}$ replaced by $\bar{\mathbf{K}}$) does not ensure the invariance of the closed-loop evolution of $(x,z)$ in the set~$\mathbf{X}$. Asymptotic stability is guaranteed only on the largest sub-level set contained in $\mathbf{X}$ of the function
\[
\bar{V}(x,z) = \sup_{\theta,\lambda}\{ V(x,z,\theta,\lambda) \mid (x,z,\theta, \lambda)\in \hat{\mathbf{K}}  \},
\]
where
\begin{align*}
\hat{\mathbf{K}} := \{(x,z, \theta, \lambda) \mid  \;  & \hat{g}(x,z, \theta, \lambda) \ge 0, \;  \hat{h}(x,z, \theta, \lambda) = 0, \\ & (x,z)\in\mathbf{X}\}.
\end{align*}

If we choose $V$ as a function of $(x,z)$ only, then trivially $\bar{V}(x,z) = V(x,z)$ and we can tractably find an inner approximation of this largest sub-level set. This can done by solving the following optimization problem:


 \begin{equation}\label{opt:maxLevelSet}
\begin{array}{rclll}
& \underset{\gamma\in\mathbb{R}_+, \{\sigma_{0,i}\},\{\sigma_{0,i}\}}{\mathrm{maximize}} & \gamma \\
& \hspace{-2.5cm} \mathrm{s.t.} &\hspace{-2.5cm} \psi_i(x,z) = \sigma_{0,i}(x,z)\! +\! \sigma_{1,i}(x,z)(\gamma-V(x,z)),\; \\
&& \hspace{-2.5cm}\{\sigma_{0,i}\},\: \{\sigma_{1,i}\} \;\;\;\; \text{SOS polynomials} \;\;\; \forall\,  i\in\{1,\ldots,n_\psi\}.
\end{array}
\end{equation}
Satisfaction of the first constraint implies that $\psi_i(x,z) \ge 0$ for all $(x,z)$ such that $V(x,z)\le \gamma$ and all $i\in\{1,\ldots,n_\psi\}$; therefore $\{(x,z)\mid V(x,z)\le\gamma\}\subset \bf X$ for any $\gamma$ feasible in~(\ref{opt:maxLevelSet}). Maximizing~$\gamma$ then maximizes the size of the inner approximation.

Problem~(\ref{opt:maxLevelSet}) is only quasi-convex because of the bilinearity between $\sigma_1$ and $\gamma$ but can be efficiently solved using a bi-section on $\gamma$. Indeed, for a fixed value of $\gamma$ problem~(\ref{opt:maxLevelSet}) is an SDP, typically of much smaller size than~(\ref{opt:sos}).

%

This immediately leads to the following theorem.
\begin{theorem}\label{thm:loca}
If a polynomial $V \in \Rb[x,z]$ satisfies~(\ref{eq:Lyap}) for all \[(x,z,\theta,\lambda,x^+,z^+,\theta^+,\lambda^+) \in \bar{\mathbf{K}}\] and $\gamma\in\mathbb{R}_+$ is feasible in~(\ref{opt:maxLevelSet}), then all trajectories of the closed-loop system~(\ref{eq:difIncl}) starting from the set $\{(x,z)\mid V(x,z)\le \gamma\}$ lie in the set $\mathbf{X}$ and $\lim_{t\to\infty} x_t = 0$.
\end{theorem}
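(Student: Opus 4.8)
The plan is a standard Lyapunov argument for the difference inclusion, but with the order of steps dictated by the ``pitfall'' noted just above the theorem: since the decrease inequality~(\ref{eq:LyapDec}) is only guaranteed on $\bar{\mathbf{K}}$, and membership in $\bar{\mathbf{K}}$ requires $(x,z)\in\mathbf{X}$, I must first establish that the trajectory never leaves $\mathbf{X}$ before I am entitled to invoke the decrease condition repeatedly. Feasibility of~(\ref{opt:maxLevelSet}) with value $\gamma$ yields, as argued there, the key inclusion $\{(x,z)\mid V(x,z)\le\gamma\}\subseteq\mathbf{X}$, and this sublevel set is the object I will show is forward invariant.

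First I would fix an arbitrary trajectory $(x_t,z_t)_{t\ge 0}$ of~(\ref{eq:difIncl}) with $V(x_0,z_0)\le\gamma$. At each step the difference inclusion provides, by definition of $\kappa(\mathbf{K}_s)$ together with the notation~(\ref{eq:hatFunctions}), a pair $(\theta_t,\lambda_t)$ satisfying $\hat{g}(x_t,z_t,\theta_t,\lambda_t)\ge 0$ and $\hat{h}(x_t,z_t,\theta_t,\lambda_t)=0$, with $x_{t+1}=f_x(x_t,\kappa(\theta_t))$ and $z_{t+1}=f_z(z_t,f_y(x_t))$; such a pair exists by Assumption~\ref{as:nonempty}, which also makes the trajectory well defined. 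Consequently, for every $t$ the eight-tuple $(x_t,z_t,\theta_t,\lambda_t,x_{t+1},z_{t+1},\theta_{t+1},\lambda_{t+1})$ lies in the set $\mathbf{K}$ of~(\ref{eq:K}), and it lies in $\bar{\mathbf{K}}$ precisely when $(x_t,z_t)\in\mathbf{X}$.

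I would then run an induction on $t$. The base case is $V(x_0,z_0)\le\gamma$, which gives $(x_0,z_0)\in\mathbf{X}$ by the inclusion above. For the inductive step, assuming $V(x_t,z_t)\le\gamma$ (hence $(x_t,z_t)\in\mathbf{X}$), the eight-tuple above lies in $\bar{\mathbf{K}}$, so~(\ref{eq:LyapDec}) applies; because $V\in\Rb[x,z]$ depends only on $(x,z)$, the terms $\theta^+,\lambda^+$ drop out and it yields
\[
V(x_{t+1},z_{t+1})\le V(x_t,z_t)-\|x_t\|^2_2\le V(x_t,z_t)\le\gamma,
\]
so $(x_{t+1},z_{t+1})$ again lies in the sublevel set and hence in $\mathbf{X}$. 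This closes the induction, proving forward invariance of $\{V\le\gamma\}$ and thus that the whole trajectory stays in $\mathbf{X}$, which is the first assertion of the theorem.

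For convergence I would telescope the decrease inequality, now known to hold at every step, to get $\sum_{t=0}^{T-1}\|x_t\|^2_2\le V(x_0,z_0)-V(x_T,z_T)$. Since invariance keeps $(x_t,z_t)\in\mathbf{X}$ for all $t$, condition~(\ref{eq:LyapNonNeg}) gives $V(x_T,z_T)\ge\|x_T\|^2_2\ge 0$, so the partial sums are bounded above by $V(x_0,z_0)$; being nondecreasing they converge, forcing $\|x_t\|^2_2\to 0$, i.e.\ $\lim_{t\to\infty}x_t=0$. The only delicate point is the one already flagged: invariance must be secured \emph{before} the decrease condition is iterated, and it must hold for \emph{every} admissible selection $(\theta_t,\lambda_t)$ of the set-valued dynamics, which is automatic here because~(\ref{eq:Lyap}) is quantified over all of $\bar{\mathbf{K}}$ rather than merely along one trajectory.
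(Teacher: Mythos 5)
Your proof is correct and fills in exactly the argument the paper leaves implicit (the paper states the theorem as following ``immediately'' from the sublevel-set inclusion $\{(x,z)\mid V(x,z)\le\gamma\}\subset\mathbf{X}$ established via~(\ref{opt:maxLevelSet})): induction giving forward invariance of the sublevel set before iterating the decrease condition, then telescoping~(\ref{eq:LyapDec}) together with the nonnegativity~(\ref{eq:LyapNonNeg}) to conclude $x_t\to 0$. You also correctly handle the points the paper glosses over, namely the existence of admissible $(\theta_t,\lambda_t)$ via Assumption~\ref{as:nonempty} and the fact that $\bar{\mathbf{K}}$ only constrains the current state $(x,z)$ to lie in $\mathbf{X}$.
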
 
Since $\bar{\mathbf{K}}$ is defined by finitely many polynomial equalities and inequalities, the search for $V$ satisfying the conditions of Theorem~\ref{thm:loca} can be cast as an SOS problem completely analogous to~(\ref{opt:sos}).

\subsection{Performance analysis -- deterministic setting}\label{sec:perfDet}
In this section we analyze the performance of the controller~(\ref{eq:cont}) with respect to a given cost function. The performance is analyzed for all initial conditions belonging to a given set $\mathbf{X}$ defined in~(\ref{eq:Xdef}). In order to facilitate the performance analysis of the difference inclusion~(\ref{eq:difIncl}) we introduce a \emph{selection oracle}:
\begin{definition}[Selection oracle]\label{def:oracle} A selection oracle is any function $\oracle: 2^{\mathbb{R}^{n_u}\setminus \emptyset} \to \Rb^{n_u}$ satisfying $\oracle(A) \in A$ for all $A \subset \Rb^{n_u} \setminus \emptyset$.
\end{definition}
In words, a selection oracle is a function which selects one point from any nonempty subset of $\Rb^{n_u}$ (note that at least one such functions exists by the axiom of choice). The performance analysis of this section then pertains to the discrete-time recurrence
\begin{subequations}\label{eq:difIncl_oracle}
\begin{align}
x^+ & =  f_x(x,\oracle(\kappa(\mathbf{K}_s))), \\
z^+ &= f_z(z,f_y(x)),
\end{align}
\end{subequations}
and all results hold for an arbitrary selection oracle $\oracle$; hence in what follows we suppress the dependence of all quantities on the selection oracle. The cost function with respect to which we analyze performance is
\begin{equation}\label{eq:cost}
\mathcal{C}(x_0,z_0) = L\alpha^{\tau(x_0,z_0)} +  \sum_{t = 0}^{\tau(x_0,z_0)-1} \alpha^t l(x_t,u_t),
\end{equation}
$(x_t,z_t)_{t=0}^\infty $ is the solution to~(\ref{eq:difIncl}), $u_t = \oracle(\kappa(\mathbf{K}_{s_t}))$, $\alpha \in (0,1)$ is a discount factor, $l$ is a polynomial stage cost,
\begin{equation}\label{eq:tau}
\tau(x,z)  := \inf\{ t\in \{1,2,\ldots\} \mid ( x_t, z_t )  \notin \mathbf{X} ,\; (x_0,z_0) = (x,z)  \}
\end{equation}
is the first time that the state $( x_t, z_t ) $ leaves $\bf X$ (setting $\tau(x,z) = +\infty$ if $(x_t,u_t) \in \mathbf{X}$ for all $t$) and
\begin{equation}\label{eq:L}
L > \sup\{  l(x,u) \mid    (x,z)\in \mathbf{X}, u \in \kappa(\mathbf{K}_s)  \} / (1-\alpha)
\end{equation}
is a constant upper bounding the stage cost $l$ on $\bf X$ divided by $1-\alpha$. We assume that $L < \infty$, which is fulfilled if the projection of $\mathbf{X}$ on $\Rb^{n_x}$ is bounded and the set $\mathbf{K}_s$ is bounded for all $s = f_s(z,f_y(x))$ with $(x,z) \in \mathbf{X}$. A constant $L$ satisfying~(\ref{eq:L}) is usually easily found since $\mathbf{X}$ is known and the controller~(\ref{eq:cont}) is usually set up in such a way that it satisfies the input constraints of system~(\ref{eq:sys}), which are typically a bounded set of a simple form.

The reason for choosing~(\ref{eq:cost}) is twofold. First, $\mathcal{C}(x_0,z_0) = \sum_{t = 0}^{\infty} \alpha^t l(x_t,u_t) $ for all $(x_0,z_0)$ such that $(x_t,z_t) \in \mathbf{X}$ for all $t$; that is, whenever $(x_t,z_t) $ stays in the state constraint set $\mathbf{X}$ for all $t$, the cost~(\ref{eq:cost}) coincides with the standard infinite-horizon discounted cost. Second, $\mathcal{C}(x_0,z_0) \le L$ for all $(x_0,z_0) \in \bf X$; that is, the cost function is bounded on $\bf X$, which enables us to obtain polynomial upper and lower bounds on $\mathcal{C}$ (which is not possible if $\mathcal{C}$ is infinite outside the maximum positively invariant subset of~(\ref{eq:difIncl}) included in $\mathbf{X}$ as is the case for the standard infinite-horizon discounted cost).

In the rest of this section we derive polynomial upper and lower bounds on $\mathcal{C}(x,z)$. To this end define
\begin{align*}
\mathbf{\hat{K}}_c := \{(x,z, \theta, \lambda) \mid  \;  & \hat{g}(x,z, \theta, \lambda) \ge 0, \;  \hat{h}(x,z, \theta, \lambda) = 0, \\ & (x,z)\notin\mathbf{X}\}.
\end{align*}

The upper bound is based on the following lemma:
\begin{lemma}\label{lem:ub}
If 
\begin{align}\label{eq:ub_V}
&V(x,z,\theta,\lambda) - \alpha V(x^+,z^+,\theta^+,\lambda^+) - l(x,\kappa(\theta))  \ge 0 \\
&\nonumber \hspace{2cm} \forall\, (x,z,\theta,\lambda, x^+,z^+,\theta^+,\lambda^+) \in \bf \bar{K},
\end{align} 
\begin{equation}\label{eq:ub_V_L}
V(x,z,\theta,\lambda) \ge L\quad \forall\, (x,z,\theta,\lambda) \in \mathbf{\hat{K}}_c
\end{equation} 
and
\begin{equation}\label{eq:ub_barV}
\overline V(x,z) \ge V(x,z,\theta,\lambda) \quad \forall\,    (x,z,\theta,\lambda) \in \bf \hat{K}  ,
\end{equation}  then $\overline{V}(x,z) \ge \mathcal{C}(x,z)$ for all $(x,z) \in \bf X$.
\end{lemma}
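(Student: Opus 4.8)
The plan is to run a discounted telescoping (``supersolution'') argument along a closed-loop trajectory, exactly as in the dynamic-programming proof that a supersolution of a Bellman inequality dominates the value function. Fix $(x_0,z_0)\in\mathbf{X}$ and let $(x_t,z_t)_{t\ge 0}$ be the trajectory of~(\ref{eq:difIncl_oracle}) it generates. At each $t$ the applied input $u_t=\oracle(\kappa(\mathbf{K}_{s_t}))$ is realized by some KKT point; that is, by Assumption~\ref{as:nonempty} and the definition of the oracle there exist $(\theta_t,\lambda_t)$ with $u_t=\kappa(\theta_t)$, $\hat{g}(x_t,z_t,\theta_t,\lambda_t)\ge 0$ and $\hat{h}(x_t,z_t,\theta_t,\lambda_t)=0$. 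Write $V_t:=V(x_t,z_t,\theta_t,\lambda_t)$. The first thing I would observe is that whenever $(x_t,z_t)\in\mathbf{X}$ the eight-tuple $(x_t,z_t,\theta_t,\lambda_t,x_{t+1},z_{t+1},\theta_{t+1},\lambda_{t+1})$ lies in $\bar{\mathbf{K}}$: the successor relations hold by construction, the KKT relations hold at both $t$ and $t+1$, and $(x_t,z_t)\in\mathbf{X}$. Hence~(\ref{eq:ub_V}) applies and yields the pointwise inequality $V_t\ge\alpha V_{t+1}+l(x_t,u_t)$.

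Next I would multiply this by $\alpha^t$ and sum. For any $T$ such that $(x_t,z_t)\in\mathbf{X}$ for $t=0,\dots,T-1$, the sum telescopes to
\begin{equation*}
V_0-\alpha^T V_T\ \ge\ \sum_{t=0}^{T-1}\alpha^t\,l(x_t,u_t).
\end{equation*}
I then split on the exit time $\tau=\tau(x_0,z_0)$. If $\tau<\infty$, I take $T=\tau$; since $(x_\tau,z_\tau)\notin\mathbf{X}$ the tuple $(x_\tau,z_\tau,\theta_\tau,\lambda_\tau)$ lies in $\mathbf{\hat{K}}_c$, so~(\ref{eq:ub_V_L}) gives $V_\tau\ge L$ and therefore
\[
V_0\ \ge\ \alpha^\tau V_\tau+\sum_{t=0}^{\tau-1}\alpha^t l(x_t,u_t)\ \ge\ L\alpha^\tau+\sum_{t=0}^{\tau-1}\alpha^t l(x_t,u_t)=\mathcal{C}(x_0,z_0).
\]

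If $\tau=\infty$, the telescoped inequality holds for every $T$, and I would let $T\to\infty$: the partial sums converge to $\sum_{t=0}^{\infty}\alpha^t l(x_t,u_t)=\mathcal{C}(x_0,z_0)$ because the stage cost is bounded on $\mathbf{X}$ (cf.~(\ref{eq:L})) and $\alpha\in(0,1)$, giving $V_0\ge\mathcal{C}(x_0,z_0)+\liminf_{T}\alpha^T V_T$. In both cases $(x_0,z_0)\in\mathbf{X}$ forces $(x_0,z_0,\theta_0,\lambda_0)\in\hat{\mathbf{K}}$, so a final application of~(\ref{eq:ub_barV}) yields $\overline{V}(x_0,z_0)\ge V_0\ge\mathcal{C}(x_0,z_0)$, which is the claim.

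The step I expect to be the main obstacle is the $\tau=\infty$ case, specifically establishing $\liminf_{T}\alpha^T V_T\ge 0$ so that the tail term does not spoil the bound. The clean route is boundedness of $V$ along the trajectory: since the trajectory remains in $\mathbf{X}$, the bound~(\ref{eq:ub_barV}) gives $V_T\le\overline{V}(x_T,z_T)$, and under the standing boundedness hypotheses that guarantee $L<\infty$ the relevant set is bounded, so $V$ is bounded there and the geometric factor $\alpha^T\to 0$ annihilates the term. One must be careful that it is a \emph{lower} bound on $\alpha^T V_T$ that is needed, so the argument genuinely relies on two-sided boundedness of $V$ on the reachable portion of $\hat{\mathbf{K}}$, not merely on the upper bound inherited from $\overline{V}$; this is precisely where I would invoke the boundedness assumptions underlying~(\ref{eq:L}).
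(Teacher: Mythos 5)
Your argument is, for the case the paper actually covers, step-for-step the paper's own proof: realize the closed-loop trajectory by KKT points $(\theta_t,\lambda_t)$ (so that the eight-tuples lie in $\bar{\mathbf{K}}$ while the state remains in $\mathbf{X}$), telescope the discounted inequality (\ref{eq:ub_V}) up to the exit time $\tau$, apply (\ref{eq:ub_V_L}) at the exit point, which lies in $\mathbf{\hat{K}}_c$, and finish with (\ref{eq:ub_barV}) at the initial condition. For $\tau<\infty$ the two proofs coincide exactly.

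Where you differ is that you treat the case $\tau=\infty$ explicitly, whereas the paper's proof silently ignores it (its displayed chain, containing the term $\alpha^{\tau}V(x_{\tau},z_{\tau},\theta_{\tau},\lambda_{\tau})$, only parses for finite $\tau$). The obstacle you flag there is genuine: the hypotheses of Lemma~\ref{lem:ub} give no lower bound on $V$ inside $\mathbf{X}$, so $\liminf_{T}\alpha^{T}V_T\ge 0$ does not come for free. Be aware, however, that your proposed resolution does not actually close it. The boundedness conditions stated after (\ref{eq:L}) bound the projection of $\mathbf{X}$ onto $\Rb^{n_x}$ and the sets $\mathbf{K}_s$, hence $x_t$, $\theta_t$ and the stage costs $l(x_t,u_t)$, but they bound neither $z_t$ nor $\lambda_t$; indeed, in the paper's own first numerical example $\mathbf{X}$ leaves $z$ completely unconstrained, so the reachable portion of $\hat{\mathbf{K}}$ need not be bounded and a polynomial $V$ need not be bounded below on it. Making your $\tau=\infty$ case airtight requires an additional hypothesis (e.g., $V$ bounded below on $\hat{\mathbf{K}}$, or compactness of $\hat{\mathbf{K}}$, or nonnegativity of $V$ as in the global stability section). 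This is a defect of the lemma as stated rather than of your argument in particular --- the paper's proof has the same hole, only hidden --- but your claim that the boundedness assumptions underlying (\ref{eq:L}) suffice is not correct as written.
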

\begin{proof}
See the Appendix.
\end{proof}

The lower bound is based on the following lemma:
\begin{lemma}\label{lem:lb}
If 
\begin{align}\label{eq:lb_V}
&V(x,z,\theta,\lambda) - \alpha V(x^+,z^+,\theta^+,\lambda^+) - l(x,\kappa(\theta))  \le 0 \\
&\nonumber \hspace{2cm} \forall\, (x,z,\theta,\lambda, x^+,z^+,\theta^+,\lambda^+) \in \bf \bar{K},
\end{align} 
\begin{equation}\label{eq:lb_V_L}
V(x,z,\theta,\lambda) \le L\quad \forall\, (x,z,\theta,\lambda) \in \mathbf{\hat{K}}_c
\end{equation} 
and
\begin{equation}\label{eq:lb_barV}
\underline V(x,z) \le V(x,z,\theta,\lambda) \quad \forall\,    (x,z,\theta,\lambda) \in \bf \hat{K}  ,
\end{equation}  then $\underline{V}(x,z) \le \mathcal{C}(x,z)$ for all $(x,z) \in \bf X$.
\end{lemma}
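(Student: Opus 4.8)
The plan is to mirror the proof of Lemma~\ref{lem:ub}, reversing every inequality, so I shall sketch the deterministic chain argument and flag where the two proofs differ only in sign. Fix an initial condition $(x_0,z_0)\in\mathbf{X}$ and let $(x_t,z_t)_{t\ge0}$ be the trajectory of~(\ref{eq:difIncl_oracle}) that it generates, with $u_t=\oracle(\kappa(\mathbf{K}_{s_t}))$. Since $u_t\in\kappa(\mathbf{K}_{s_t})$, for each $t$ there exists a pair $(\theta_t,\lambda_t)$ with $u_t=\kappa(\theta_t)$, $\hat{h}(x_t,z_t,\theta_t,\lambda_t)=0$ and $\hat{g}(x_t,z_t,\theta_t,\lambda_t)\ge0$; its existence is exactly what Assumption~\ref{as:nonempty} together with the definition~(\ref{eq:Kz}) of $\mathbf{K}_s$ guarantees. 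By construction $x_{t+1}=f_x(x_t,\kappa(\theta_t))$ and $z_{t+1}=f_z(z_t,f_y(x_t))$, so these choices are consistent with the difference inclusion.

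First I would establish the one-step inequality. For every $t<\tau(x_0,z_0)$ the state obeys $(x_t,z_t)\in\mathbf{X}$, and hence the full tuple $(x_t,z_t,\theta_t,\lambda_t,x_{t+1},z_{t+1},\theta_{t+1},\lambda_{t+1})$ belongs to $\bar{\mathbf{K}}$; applying~(\ref{eq:lb_V}) gives $V(x_t,z_t,\theta_t,\lambda_t)\le\alpha V(x_{t+1},z_{t+1},\theta_{t+1},\lambda_{t+1})+l(x_t,\kappa(\theta_t))$, where $l(x_t,\kappa(\theta_t))=l(x_t,u_t)$. Multiplying the inequality at time $t$ by $\alpha^t$ and summing telescopes to
\[
V(x_0,z_0,\theta_0,\lambda_0)\le\sum_{t=0}^{T-1}\alpha^t l(x_t,u_t)+\alpha^T V(x_T,z_T,\theta_T,\lambda_T)
\]
for every $T\le\tau(x_0,z_0)$.

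Next I would close the bound at the exit time. If $\tau:=\tau(x_0,z_0)<\infty$, then $(x_\tau,z_\tau)\notin\mathbf{X}$, so $(x_\tau,z_\tau,\theta_\tau,\lambda_\tau)\in\mathbf{\hat{K}}_c$ and~(\ref{eq:lb_V_L}) yields $V(x_\tau,z_\tau,\theta_\tau,\lambda_\tau)\le L$; taking $T=\tau$ above and using $\alpha^\tau\ge0$ gives $V(x_0,z_0,\theta_0,\lambda_0)\le\sum_{t=0}^{\tau-1}\alpha^t l(x_t,u_t)+\alpha^\tau L=\mathcal{C}(x_0,z_0)$ by the definition~(\ref{eq:cost}). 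Since $(x_0,z_0)\in\mathbf{X}$ we have $(x_0,z_0,\theta_0,\lambda_0)\in\hat{\mathbf{K}}$, so~(\ref{eq:lb_barV}) finally gives $\underline{V}(x_0,z_0)\le V(x_0,z_0,\theta_0,\lambda_0)\le\mathcal{C}(x_0,z_0)$, which is the claim.

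The only genuinely delicate point is the case $\tau=\infty$, for which $\mathcal{C}(x_0,z_0)=\sum_{t=0}^\infty\alpha^t l(x_t,u_t)$, and I would handle it by letting $T\to\infty$ in the telescoped inequality. The series converges because $l$ is bounded on $\mathbf{X}$---the same boundedness of the $x$-projection of $\mathbf{X}$ and of $\mathbf{K}_s$ that makes $L<\infty$ in~(\ref{eq:L})---and $\alpha\in(0,1)$. The remainder $\alpha^T V(x_T,z_T,\theta_T,\lambda_T)$ must be shown to vanish: when $\tau=\infty$ the tuple stays in $\hat{\mathbf{K}}$ for all $T$, and provided $\hat{\mathbf{K}}$ is bounded the polynomial $V$ is bounded there, so $\alpha^T\to0$ kills the term, yielding $V(x_0,z_0,\theta_0,\lambda_0)\le\mathcal{C}(x_0,z_0)$ and then the conclusion via~(\ref{eq:lb_barV}) as before. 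This boundedness bookkeeping is the step I expect to require the most care; everything else is a sign-reversed transcription of the upper-bound argument of Lemma~\ref{lem:ub}.
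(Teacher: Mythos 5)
Your proof is correct and takes essentially the same route as the paper, which disposes of this lemma with the single remark that it is analogous to the proof of Lemma~\ref{lem:ub}: choose KKT certificates $(\theta_t,\lambda_t)$ along the trajectory, telescope the discounted one-step inequality~(\ref{eq:lb_V}), close the bound at the exit time via~(\ref{eq:lb_V_L}), and conclude with~(\ref{eq:lb_barV}). Your explicit handling of the $\tau=\infty$ case is in fact more careful than the paper's own argument (which tacitly treats $\tau$ as finite); the boundedness you need there holds under the same compactness conditions the paper invokes to guarantee $L<\infty$ in~(\ref{eq:L}).
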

\begin{proof}
Analogous to the proof of Lemma~\ref{lem:ub}.
\end{proof}
%

The previous two lemmas lead immediately to optimization problems providing upper and lower bounds on $\mathcal{C}(x,z)$.

 An upper bound on $\mathcal{C}(x,z)$ is given by the following optimization problem:
 \begin{equation}\label{opt:C_ub}
\begin{array}{rclll}
& \underset{V,\overline{V}}{\mathrm{minimize}} & \displaystyle \int_\mathbf{X} \overline{V}(x,z) \rho(x,z) d(x,z) \vspace{1.2mm}\\
& \mathrm{s.t.} & (\ref{eq:ub_V}), (\ref{eq:ub_V_L}), (\ref{eq:ub_barV}),
\end{array}
\end{equation}
where $\rho(x,z)$ is a user-defined nonnegative weighting function allowing one to put a different weight on different initial conditions. Typical examples are $\rho(x,z) = 1$ or $\rho(x,z)$ equal to the indicator function of a certain subset of $\mathbf{X}$ (see Example~\ref{ex:bilinear}).

A lower bound on $\mathcal{C}(x,z)$ is given by the following optimization problem:
 \begin{equation}\label{opt:C_lb}
\begin{array}{rclll}
& \underset{V,\underline{V}}{\mathrm{maximize}} & \displaystyle \int_\mathbf{X} \underline{V}(x,z) \rho(x,z) d(x,z) \vspace{1.2mm}\\
& \mathrm{s.t.} & (\ref{eq:lb_V}), (\ref{eq:lb_V_L}), (\ref{eq:lb_barV}).
\end{array}
\end{equation}

In both optimization problems, the optimization is over continuous functions $(V,\overline{V})$ or $(V,\underline{V})$; in order to make the problems tractable we restrict the class of functions to polynomials of predefined degrees and replace the nonnegativity conditions (\ref{eq:ub_V}), (\ref{eq:ub_V_L}), (\ref{eq:ub_barV}) or (\ref{eq:lb_V}), (\ref{eq:lb_V_L}), (\ref{eq:lb_barV}) by sufficient SOS conditions (see Section~\ref{sec:SOSprog}). For (\ref{eq:ub_V}), (\ref{eq:ub_barV}) and (\ref{eq:lb_V}), (\ref{eq:lb_barV}), these conditions are completely analogous to~(\ref{eq:sos_eqs}). For (\ref{eq:ub_V_L}) and (\ref{eq:lb_V_L}) we have to deal with the condition $(x,z) \notin \bf X$. A sufficient condition for~(\ref{eq:ub_V_L}) is
\begin{align}\label{eq:sufCondbnd}
& \hspace{-0.21mm} V(x,z,\theta, \lambda) - L  = -\sigma_{\psi_i}(\zeta)\psi_i(x,z) + \sigma_0(\zeta)  \\   & + \sigma_1(\zeta)^\top \hat{g}(x,z,\theta,\lambda) + \bar{p}_1(\zeta)^\top \hat{h}(x,z,\theta,\lambda)\; \forall\, i \in \{1,\ldots, n_\psi\}, \nonumber 
\end{align}
 where
 \[
 \zeta:= (x,z,\theta, \lambda),
 \]
$\sigma_0$, $\sigma_1$ and $\sigma_{\psi_i}$'s are SOS and $\bar{p}_1$ is a polynomial. For each $i\in \{1,\ldots,n_\psi\}$ this condition implies that $ V(x,z,\theta, \lambda) - L  \ge 0 $ on
\begin{align*}
\mathbf{K}_{c,i} = \{(x,z, \theta, \lambda) \mid  \;  & \hat{g}(x,z, \theta, \lambda) \ge 0, \;  \hat{h}(x,z, \theta, \lambda) = 0, \\ & \psi_i(x,z) \le 0 \}.
\end{align*}
Since $\cup_{i=1}^{n_\psi} \mathbf{K}_{c,i} = \mathbf{K}_c$, the condition (\ref{eq:sufCondbnd}) indeed implies~(\ref{eq:ub_V_L}). A sufficient condition for (\ref{eq:lb_V_L}) is obtained by replacing the left-hand side of~(\ref{eq:sufCondbnd}) by $L - V(x,z,\theta, \lambda) $.

Therefore, all constraints of the optimization problems~(\ref{opt:C_ub}) and (\ref{opt:C_lb}) can be enforced through sufficient SOS conditions. The objective function is linear in the coefficients of the polynomials $\overline{V}$ or $\underline{V}$ and can be evaluated in closed form for simple sets $\mathbf{X}$; see Section~\ref{sec:SOSprog} for details.

In conclusion, by restricting the class of decision variables in (\ref{opt:C_ub}) and (\ref{opt:C_lb}) to polynomials of a prescribed degree and replacing the nonnegativity constraints by sufficient SOS conditions, the problems (\ref{opt:C_ub}) and (\ref{opt:C_lb}) become SOS problems with linear objective and hence immediately translate to SDPs.

\subsection{Performance analysis -- stochastic setting}\label{sec:perfStoch}
A small modification of the developments from the previous section allows us to analyze the performance in a stochastic setting, where~(\ref{eq:difIncl}) is replaced by
\begin{subequations}\label{eq:difIncl_stoch}
\begin{align}
x^+ &  = f_x(x,\oracle(\kappa(\mathbf{K}_s)),w), \\
z^+ &= f_z(z,f_y(x,v)),
\end{align}
\end{subequations}
where $\oracle$ is an arbitrary selection oracle (see Definition~\ref{def:oracle}) and $(w,v)$ is an iid (with respect to time) process and measurement noise with known joint probability distributions $P_{w,v}$, i.e., \[ \mathbb{P}(w \in A, v \in B) = P_{w,v}(A\times B)\] for all Borel sets $A\subset \Rb^{n_w}$ and $B \subset \Rb^{n_v}$. We analyze the performance with respect to the cost function
\begin{equation}\label{eq:costStoch}
\mathcal{C}_\mr{s}(x_0,z_0) = \E \Big\{ L\alpha^{\tau(x_0,z_0)}\} + \hspace{-2mm}  \sum_{t = 0}^{\tau(x_0,z_0)-1} \alpha^t l(x_t,u_t) \Big \},
\end{equation}
where $\tau(x_0,z_0)$ defined in~(\ref{eq:tau}) is now a random variable and $L$ satisfies~(\ref{eq:L}). The expectation in~(\ref{eq:costStoch}) is over the realizations of the stochastic process $(w_t,v_t)_{t=0}^\infty$. The rationale behind~(\ref{eq:costStoch}) is the same as behind~(\ref{eq:cost}).

The stochastic counterpart to Lemma~\ref{lem:ub} reads
\begin{lemma}\label{lem:ub_stoch}
If 
\begin{align}\label{eq:ub_V_stoch}
&\overline{V}(x,z) \!-\! \alpha \E \overline{V}(f_x(x,\kappa(\theta),w),f_z(z,f_y(x,v))) \! - \! l(x,\kappa(\theta))  \nonumber \\ & \hspace{3.5cm} \ge 0  \quad \forall\, (x,z,\theta,\lambda) \in \bf \hat{K},
\end{align} 
and
\begin{equation}\label{eq:ub_V_L_stoch}
\overline{V}(x,z) \ge L\quad \forall\, (x,z) \in \mathbf{X}^c,
\end{equation} 
then $\overline{V}(x,z) \ge \mathcal{C}_\mr{s}(x,z) $ for all $(x,z) \in \bf X$,
\end{lemma}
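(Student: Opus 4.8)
The plan is to prove this by a stopped-supermartingale (dynamic-programming) argument that mirrors the deterministic telescoping behind \lemmaref{lem:ub}, now taking expectations over the noise $(w,v)$. Fix an initial condition $(x_0,z_0)\in\mathbf{X}$ and let $(x_t,z_t)_{t\ge 0}$ be the closed-loop process generated by \eqref{eq:difIncl_stoch} under the (fixed) selection oracle, with $u_t=\oracle(\kappa(\mathbf{K}_{s_t}))$. Write $(\mathcal{F}_t)_{t\ge 0}$ for the natural filtration and let $\tau$ be the exit time \eqref{eq:tau}; note that $\tau$ is an $(\mathcal{F}_t)$-stopping time because $\{\tau>t\}=\{(x_1,z_1),\dots,(x_t,z_t)\in\mathbf{X}\}$ is $\mathcal{F}_t$-measurable. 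The first step is to reinterpret \eqref{eq:ub_V_stoch} as a one-step Bellman inequality along the trajectory: whenever $t<\tau$ the state obeys $(x_t,z_t)\in\mathbf{X}$, so (using Assumption~\ref{as:nonempty} and the definition of the oracle) the realized control is attained by some $\theta_t$ with $u_t=\kappa(\theta_t)$ and a multiplier $\lambda_t$ making $(x_t,z_t,\theta_t,\lambda_t)\in\hat{\mathbf{K}}$. Since, conditionally on $\mathcal{F}_t$, the only randomness in $(x_{t+1},z_{t+1})$ comes from the i.i.d.\ draw $(w_t,v_t)$, evaluating \eqref{eq:ub_V_stoch} at $(x_t,z_t,\theta_t,\lambda_t)$ yields
\[
\overline{V}(x_t,z_t)\ \ge\ l(x_t,u_t)+\alpha\,\E\big[\overline{V}(x_{t+1},z_{t+1})\,\big|\,\mathcal{F}_t\big]\quad\text{on }\{\tau>t\}.
\]

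Next I would introduce the stopped, discounted process
\[
W_t:=\alpha^{t\wedge\tau}\,\overline{V}(x_{t\wedge\tau},z_{t\wedge\tau})+\sum_{k=0}^{(t\wedge\tau)-1}\alpha^k\,l(x_k,u_k),
\]
and show it is an $(\mathcal{F}_t)$-supermartingale. On $\{\tau\le t\}$ it is constant in $t$; on $\{\tau>t\}$ the increment equals $\alpha^t\big(\alpha\overline{V}(x_{t+1},z_{t+1})-\overline{V}(x_t,z_t)+l(x_t,u_t)\big)$, whose $\mathcal{F}_t$-conditional expectation is nonpositive by the displayed Bellman inequality. Hence $\E\,W_N\le W_0=\overline{V}(x_0,z_0)$ for every $N$.

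Finally I would pass to the limit $N\to\infty$ and invoke the terminal bound \eqref{eq:ub_V_L_stoch}. On $\{\tau<\infty\}$, for $N\ge\tau$ one has $W_N=\alpha^\tau\overline{V}(x_\tau,z_\tau)+\sum_{k=0}^{\tau-1}\alpha^k l(x_k,u_k)\ge L\alpha^\tau+\sum_{k=0}^{\tau-1}\alpha^k l(x_k,u_k)$, because $(x_\tau,z_\tau)\in\mathbf{X}^c$; on $\{\tau=\infty\}$ the terminal term $\alpha^N\overline{V}(x_N,z_N)\to 0$ (as $\overline{V}$ is bounded on $\mathbf{X}$ and $\alpha\in(0,1)$) and the sum converges to $\sum_{k=0}^\infty\alpha^k l(x_k,u_k)$. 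In both cases $\liminf_N W_N$ dominates the integrand $L\alpha^\tau+\sum_{t=0}^{\tau-1}\alpha^t l(x_t,u_t)$ of $\mathcal{C}_\mr{s}(x_0,z_0)$ in \eqref{eq:costStoch}. Combining this with $\E\,W_N\le\overline{V}(x_0,z_0)$ via Fatou's lemma then gives $\mathcal{C}_\mr{s}(x_0,z_0)\le\overline{V}(x_0,z_0)$.

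The delicate points, and the only places where the stochastic version genuinely differs from \lemmaref{lem:ub}, are the measure-theoretic ones. Since $\oracle$ is an arbitrary selection function (existing by the axiom of choice), it need not be measurable, so one must either assume the induced closed-loop process is a well-defined adapted stochastic process or restrict to measurable oracles; without this the conditional expectations above are not even meaningful. Granting that, the remaining obstacle is justifying the limit interchange in the final step, which I expect to be routine: the stage-cost sum is uniformly bounded through $L<\infty$ and boundedness of $l$ on $\mathbf{X}$, and the terminal term is controlled by discounting together with boundedness of $\overline{V}$ on $\mathbf{X}$, so $(W_N)$ is bounded below and Fatou's lemma applies.
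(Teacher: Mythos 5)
Your proof is correct and rests on the same mechanism as the paper's: iterate the dissipation inequality \eqref{eq:ub_V_stoch} along the closed-loop trajectory while $(x_t,z_t)\in\mathbf{X}$, invoke \eqref{eq:ub_V_L_stoch} at the exit point, and take expectations. The bookkeeping, however, is organized differently. The paper partitions the probability space by the events $\{\tau=k\}$, iterates the inequality conditionally on each event, and then ``sums over $k$''; you instead package the same telescoping into the stopped discounted process $W_t$, prove it is a supermartingale, and conclude via $\E\, W_N\le W_0=\overline{V}(x_0,z_0)$ together with Fatou's lemma. Your route buys rigor precisely where the paper's three-line proof is loose: it explicitly covers the event $\{\tau=\infty\}$, about which the paper's summation over finite $k$ says nothing and where one needs $\alpha^N\overline{V}(x_N,z_N)\to 0$ (i.e., boundedness of $\overline{V}$ on $\mathbf{X}$, automatic for a polynomial $\overline{V}$ when $\mathbf{X}$ is bounded, as is implicit in the standing assumption $L<\infty$); and it makes the final limit interchange an explicit Fatou step rather than an implicit one. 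Your measurability caveat is also well taken and applies equally to the paper's own argument: for an arbitrary axiom-of-choice selection oracle the closed-loop process need not be measurable, so neither your conditional expectations nor the paper's conditional expectations $\E\{\cdot\mid\tau=k\}$ are a priori well defined; restricting to measurable selections (or postulating that the closed-loop process is a well-defined stochastic process) is the standard repair, and this gap belongs to the paper, not to your proof.
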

\noindent where $\mathbf{X}^c$ is the complement of $\mathbf{X}$.
\begin{proof}
See the Appendix.
\end{proof}

 The stochastic counterpart to Lemma~\ref{lem:lb} reads
\begin{lemma}\label{lem:lb_stoch}
If 
\begin{align}\label{eq:lb_V_stoch}
&\underline{V}(x,z) - \alpha \E \underline{V}(f_x(x,\kappa(\theta),w),f_z(z,f_y(x,v))) - l(x,\kappa(\theta))  \nonumber \\ & \hspace{3.5cm} \le 0  \quad \forall\, (x,z,\theta,\lambda) \in \bf \hat{K},
\end{align} 
and
\begin{equation}\label{eq:lb_V_L_stoch}
\underline{V}(x,z) \le L\quad \forall\, (x,z) \in \mathbf{X}^c,
\end{equation} 
then $\underline{V}(x,z) \le \mathcal{C}_\mr{s}(x,z) $ for all $(x,z) \in \bf X$.
\end{lemma}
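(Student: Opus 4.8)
The plan is to mirror the iteration argument behind Lemma~\ref{lem:lb} and its stochastic upper-bound counterpart Lemma~\ref{lem:ub_stoch}, but now to propagate the inequality \emph{forward} under the noise $(w,v)$ and to stop at the exit time $\tau$ of~\eqref{eq:tau}. Fix an initial condition $(x_0,z_0)\in\mathbf{X}$ and let $(x_t,z_t)_{t\ge 0}$ be the closed-loop trajectory generated by~\eqref{eq:difIncl_stoch} with $u_t=\oracle(\kappa(\mathbf{K}_{s_t}))$. For each $t$ the oracle selects a specific $u_t=\kappa(\theta_t)$ with some $(\theta_t,\lambda_t)$ satisfying $\hat{g}\ge 0$, $\hat{h}=0$; whenever $t<\tau$ we have $(x_t,z_t)\in\mathbf{X}$ and hence $(x_t,z_t,\theta_t,\lambda_t)\in\hat{\mathbf{K}}$. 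Evaluating~\eqref{eq:lb_V_stoch} at this point, and using that $(w_t,v_t)$ is iid and independent of the past, gives the one-step inequality
\begin{equation*}
\underline{V}(x_t,z_t)\le l(x_t,u_t)+\alpha\,\E[\underline{V}(x_{t+1},z_{t+1})\mid \mathcal{F}_t]\quad\text{on }\{t<\tau\},
\end{equation*}
where $\mathcal{F}_t$ is the natural filtration and the conditional expectation over $(w_t,v_t)$ reproduces exactly the $\E\underline{V}(\cdot)$ term of~\eqref{eq:lb_V_stoch}.

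Second, I would turn this into a submartingale statement by telescoping with the discount factor and the stopping time. Define the stopped, discounted process
\begin{equation*}
Y_t:=\sum_{k=0}^{(t\wedge\tau)-1}\alpha^k l(x_k,u_k)+\alpha^{t\wedge\tau}\,\underline{V}(x_{t\wedge\tau},z_{t\wedge\tau}),
\end{equation*}
noting that $\tau\ge 1$, so $Y_0=\underline{V}(x_0,z_0)$. On $\{\tau\le t\}$ one has $Y_{t+1}=Y_t$, while on $\{t<\tau\}$ the one-step inequality above yields $\E[Y_{t+1}\mid\mathcal{F}_t]\ge Y_t$; hence $(Y_t)$ is a submartingale and $\underline{V}(x_0,z_0)=Y_0\le\E[Y_t]$ for every $t$.

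Third, I would pass to the limit $t\to\infty$. On $\{\tau<\infty\}$, $Y_t\to\sum_{k=0}^{\tau-1}\alpha^k l(x_k,u_k)+\alpha^{\tau}\underline{V}(x_\tau,z_\tau)$, and since $(x_\tau,z_\tau)\in\mathbf{X}^c$, \eqref{eq:lb_V_L_stoch} gives $\underline{V}(x_\tau,z_\tau)\le L$, so this limit is at most the integrand of $\mathcal{C}_\mr{s}$ in~\eqref{eq:costStoch}. On $\{\tau=\infty\}$, $(x_t,z_t)\in\mathbf{X}$ for all $t$, so $\alpha^t\underline{V}(x_t,z_t)\to 0$ (using $\alpha\in(0,1)$ and boundedness of $\underline{V}$ on $\mathbf{X}$) and $Y_t\to\sum_{k=0}^{\infty}\alpha^k l(x_k,u_k)$, which coincides with $\mathcal{C}_\mr{s}$ since $L\alpha^\tau=0$ there. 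Because $\sup_{\mathbf{X}} l<\infty$ (the same assumption that renders $L$ in~\eqref{eq:L} finite) and $\underline{V}$ is bounded on $\mathbf{X}$ and by $L$ on the exit set, the $Y_t$ are uniformly bounded by a deterministic constant, so dominated convergence permits the exchange of limit and expectation: $\underline{V}(x_0,z_0)\le\lim_t\E[Y_t]=\E[\lim_t Y_t]\le\mathcal{C}_\mr{s}(x_0,z_0)$.

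The routine parts are the telescoping and the sign bookkeeping, which are identical to the deterministic case of Lemma~\ref{lem:lb} up to replacing the successor value by its conditional expectation. The main obstacle I expect lies at the two boundaries of the argument: first, justifying that condition~\eqref{eq:lb_V_stoch}, stated pointwise over $\hat{\mathbf{K}}$, may legitimately be applied along the realized trajectory at the particular $(\theta_t,\lambda_t)$ produced by the selection oracle, which relies on Assumption~\ref{as:nonempty} together with the independence of $(w_t,v_t)$ from $\mathcal{F}_t$ to identify the conditional expectation with the $\E$ appearing in~\eqref{eq:lb_V_stoch}; and second, the exchange of limit and expectation as $t\to\infty$, where the terminal term $\alpha^{t\wedge\tau}\underline{V}$ must be controlled uniformly (via $\alpha<1$, \eqref{eq:lb_V_L_stoch}, and boundedness of $\underline{V}$ on $\mathbf{X}$) so that the random stopping time $\tau$ is handled correctly on both $\{\tau<\infty\}$ and $\{\tau=\infty\}$.
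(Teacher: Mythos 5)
Your proposal is correct, and it is built on the same two ingredients as the paper's argument: the one-step inequality \eqref{eq:lb_V_stoch} applied along the realized trajectory while it stays in $\mathbf{X}$ (legitimate because $(x_t,z_t,\theta_t,\lambda_t)\in\hat{\mathbf{K}}$ there and $(w_t,v_t)$ is independent of the past), and the boundary condition \eqref{eq:lb_V_L_stoch} at the exit state. The packaging, however, differs from the paper's. The paper proves this lemma by mirroring its proof of Lemma~\ref{lem:ub_stoch}: it decomposes the probability space into the events $\{\tau=k\}$, iterates the inequality conditionally on each such event to obtain $\underline{V}(x_0,z_0)\le \alpha^k L + \E\{\sum_{t=0}^{k-1}\alpha^t l(x_t,u_t)\mid \tau=k\}$, and then sums over $k$. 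You instead construct the stopped discounted process $Y_t$, verify the submartingale property using that $\{t<\tau\}\in\mathcal{F}_t$, and pass to the limit. Your route is more careful on exactly the points the paper glosses over: conditioning on $\{\tau=k\}$ is delicate because that event is not $\mathcal{F}_t$-measurable for $t<k$ (the one-step inequality is an $\mathcal{F}_t$-conditional statement, not one conditional on a future event), and the event $\{\tau=\infty\}$, silently absorbed in the paper's ``summing over $k$,'' is treated explicitly in your limit argument via $\alpha^t\underline{V}(x_t,z_t)\to 0$. The price is that you explicitly invoke boundedness of $\underline{V}$ on $\mathbf{X}$ and uniform boundedness of $Y_t$; the paper never states such an assumption, but its own proof needs it as well (it holds, e.g., when $\mathbf{X}$ is compact), so making it visible is a feature, not a gap. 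One small refinement: since you only need $\underline{V}(x_0,z_0)=Y_0\le\E[Y_t]$ together with an upper bound in the limit, a reverse-Fatou argument using only the uniform \emph{upper} bound on $Y_t$ (from $\sup_{\mathbf{X}}l<\infty$, \eqref{eq:lb_V_L_stoch}, and $\sup_{\mathbf{X}}\underline{V}<\infty$) suffices; this avoids the two-sided bound required by dominated convergence, which can fail when the noise has unbounded support and the exit state makes $\underline{V}$ very negative.
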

\begin{proof}
Similar to the proof of Lemma~\ref{lem:ub_stoch}.
\end{proof}
\noindent Upper and lower bounds on $\mathcal{C}_\mr{s}$ are then obtained by
 \begin{equation}\label{opt:C_ub_stoch}
\begin{array}{rclll}
& \underset{\overline{V}}{\mathrm{minimize}} & \displaystyle \int_\mathbf{X} \overline{V}(x,z) \rho(x,z)d(x,z) \vspace{1.2mm}\\
& \mathrm{s.t.} & (\ref{eq:ub_V_stoch}), (\ref{eq:ub_V_L_stoch}),
\end{array}
\end{equation}
and
 \begin{equation}\label{opt:C_lb_stoch}
\begin{array}{rclll}
& \underset{\underline{V}}{\mathrm{maximize}} & \displaystyle \int_\mathbf{X} \underline{V}(x,z) \rho(x,z)d(x,z) \vspace{1.2mm}\\
& \mathrm{s.t.} & (\ref{eq:lb_V_stoch}), (\ref{eq:lb_V_L_stoch}),
\end{array}
\end{equation}
where $\rho(x,z)$ is a given nonnegative weighting function. Polynomial upper and lower bounds on $ \mathcal{C}_\mr{s}$ are obtained by restricting the functions $\overline{V}$ and $\underline{V}$ to polynomials and replacing the nonnegativity constraints by sufficient SOS constraints in exactly the same fashion as in the deterministic setting. The expectation in the constraints~(\ref{eq:ub_V_stoch}) and (\ref{eq:lb_V_stoch}) is handled as follows: Given a polynomial $p(x,w,v) = \sum_{\alpha, \beta,\gamma} p_{(\alpha,\beta,\gamma)}x^\alpha w^\beta v^\gamma$ with coefficients $\{p_{(\alpha,\beta,\gamma)}\}$ indexed by multiindices $(\alpha,\beta,\gamma)$, we have
\begin{align*}
&\E\, p(x,w,v) = \int p(x,w,v) dP_{w,v}(w,v) \\ &= \sum_{\alpha, \beta,\gamma} p_{(\alpha,\beta,\gamma)} x^\alpha \int w^\beta v^\gamma d P_{w,v}(w,v),
\end{align*}
where the moments  $\int w^\beta v^\gamma d P_{w,v}(w,v)$ are fixed numbers and can be precomputed offline. Hence, the expectation in~(\ref{eq:ub_V_stoch}) and (\ref{eq:lb_V_stoch}) is linear in the decision variables, as required, and is available in closed form provided that the moments of $P_{w,v}$ are known.

\begin{remark}
Note that in problems~(\ref{opt:C_ub_stoch}) and (\ref{opt:C_lb_stoch}) we use only one function $\overline{V}$ and $\underline{V}$ instead of pairs of functions $(V,\overline{V})$ and $(V,\underline{V})$ in problems~(\ref{opt:C_ub}) and (\ref{opt:C_lb}). Using a pair of functions gives more degrees of freedom and hence smaller conservatism (see Remark~\ref{rem:other_vars}) of the upper and lower bounds, but is difficult to use in the stochastic setting because of the need to evaluate the expectation of a function of $(\theta^+,\lambda^+)$ which has an unknown dependence on $(w,v)$. In order to overcome this, one would either have  to impose additional assumptions or resort to a worst-case approach.
\end{remark}

\subsection{Robustness analysis -- global $\mathcal{L}_2$ gain, ISS}\label{sec:perfRob}
In this section we describe how to analyze performance in a robust setting in terms of the $\mathcal{L}_2$ gains from $w$ and $v$ to a performance output
\begin{equation}\label{eq:perfOut}
\hat{y} = f_{\hat{y}}(x),
\end{equation}
where $f_{\hat{y}}$ is a polynomial. We assume the same dynamics~(\ref{eq:difIncl_stoch}) as in Section~(\ref{sec:perfStoch}) but now all that is known about $w$ and $v$ is that they take values in a given (possibly state-dependent) set
\begin{equation}\label{eq:W}
\mathbf{W}(x,z) = \{(w,v)\in\mathbb{R}^{n_w}\times \mathbb{R}^{n_v} \mid \psi_w(x,z,w,v) \ge 0\},
\end{equation}
where each component of $\psi_w:\mathbb{R}^{n_x+n_z+n_w+n_v} \to \Rb^{n_{\psi_w}}$ is a polynomial in $(x,z,w,v)$. Note that we do not a priori assume that the set $\mathbf{W}(x,z)$ is compact.

Defining
\begin{align}\label{eq:Kw}
 \mathbf{K}_w \!=\!\big\{ (x,z, \theta, \lambda, w, v,x^+ z^+, \theta^+, \lambda^+,w^+,v^+)  \mid  \\ 
 & \hspace{-6.0cm} \nonumber \hat{h}(x,z,\theta,\lambda) = 0, \; \hat{g}(x,z, \theta, \lambda) \ge 0, \\
& \hspace{-6.0cm}  \nonumber  \;  \hat{h}(x^+,z^+, \theta^+, \lambda^+) = 0,\;  \hat{g}(x^+,z^+, \theta^+, \lambda^+) \ge 0, \\ &\hspace{-6cm} \psi_w(x,z,w,v)\ge 0,\; \psi_w(x^+,z^+,w^+,v^+)\ge 0, \nonumber \\ \nonumber & \hspace{-6cm} x^+ - f_x(x,\kappa(\theta),w) = 0,\;z^+ - f_z(z,f_y(x,v)) = 0
 \big\},
\end{align} 
and
\begin{align}\label{eq:Kw_hat}
\nonumber \hat{\mathbf{K}}_w \!=\!\big\{ (x,z, \theta, \lambda, w, v)  \mid \;\,  &\hat{h}(x,z,\theta,\lambda) = 0,\; \hat{g}(x,z, \theta, \lambda) \ge 0,  \\ &\psi_w(x,z,w,v)\ge 0 \},
\end{align} 
we can seek a function $V$ such that
\begin{align}\label{eq:LyapDec_w}
\nonumber  &V(x^+, z^+,\theta^+, \lambda^+,w^+,v^+) - V(x, z,\theta, \lambda,w,v) \le \\ 
& \hspace{3.6cm} -\| f_{\hat{y}}(x) \| ^2_2 + \alpha_w\|w\|_2^2 + \alpha_v\|v\|_2^2 \nonumber\\
&\hspace{0.9cm} \forall\;(x,z, \theta, \lambda, w, v,x^+ z^+, \theta^+, \lambda^+,w^+,v^+)  \in \mathbf{K}_w,
\end{align}
\begin{align}\label{eq:LyapNonNeg_w}
V(x,z,\theta, \lambda,w,v) \ge 0\;\;\;\; \forall\; (x,z,\theta, \lambda,w,v) \in \hat{\mathbf{K}}_w.
\end{align}
and
\begin{align}\label{eq:Lyapzer_w}
V(0,0,\theta, \lambda,w,v) = 0\;\;\;\; \forall\; (0,0,\theta, \lambda,w,v) \in \hat{\mathbf{K}}_w.
\end{align}

The following lemma and its immediate corollary links the satisfaction of (\ref{eq:LyapDec_w}),~(\ref{eq:LyapNonNeg_w}) and~(\ref{eq:Lyapzer_w}) to the $\mathcal{L}_2$ gain from $w$ and $v$ to $\hat{y}$.
\begin{lemma}\label{lem:rob}
If $V$ satisfies~(\ref{eq:LyapDec_w}),~(\ref{eq:LyapNonNeg_w}) for some $\alpha_w \ge 0$ and $\alpha_v \ge 0$, then
\begin{align}\label{eq:L2bounded}
\sum_{t=0}^\infty \| \hat{y}_t\|_2^2 & \le V(x_0,z_0,\theta_0,\lambda_0,w_0,v_0) \\ & + \alpha_w  \sum_{t=0}^\infty \| w_t\|_2^2 +  \alpha_v \sum_{t=0}^\infty \| v_t\|_2^2. \nonumber
\end{align}
\end{lemma}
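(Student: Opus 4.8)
The plan is to use a standard dissipation argument based on telescoping the decrease inequality~\eqref{eq:LyapDec_w} along closed-loop trajectories. First I would fix an arbitrary trajectory $(x_t,z_t,\theta_t,\lambda_t,w_t,v_t)_{t=0}^\infty$ generated by the recurrence~\eqref{eq:difIncl_stoch}, where $u_t = \oracle(\kappa(\mathbf{K}_{s_t})) = \kappa(\theta_t)$ for some $\theta_t \in \mathbf{K}_{s_t}$ with associated multiplier $\lambda_t$, and $(w_t,v_t)\in\mathbf{W}(x_t,z_t)$. The key observation is that for each $t$ the tuple $(x_t,z_t,\theta_t,\lambda_t,w_t,v_t,x_{t+1},z_{t+1},\theta_{t+1},\lambda_{t+1},w_{t+1},v_{t+1})$ belongs to the set $\mathbf{K}_w$ defined in~\eqref{eq:Kw}: the membership $\theta_t\in\mathbf{K}_{s_t}$ gives $\hat{h}(x_t,z_t,\theta_t,\lambda_t)=0$ and $\hat{g}(x_t,z_t,\theta_t,\lambda_t)\ge 0$ (and likewise at time $t+1$), the disturbance constraints $\psi_w(x_t,z_t,w_t,v_t)\ge 0$ hold by~\eqref{eq:W}, and the dynamics $x_{t+1}=f_x(x_t,\kappa(\theta_t),w_t)$, $z_{t+1}=f_z(z_t,f_y(x_t,v_t))$ hold by construction. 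Hence~\eqref{eq:LyapDec_w} may be invoked at every $t$, and since $\hat y_t = f_{\hat y}(x_t)$ this reads
\[
V_{t+1}-V_t \le -\|\hat y_t\|_2^2 + \alpha_w\|w_t\|_2^2 + \alpha_v\|v_t\|_2^2,
\]
where I abbreviate $V_t := V(x_t,z_t,\theta_t,\lambda_t,w_t,v_t)$.

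Next I would sum this inequality over $t=0,\ldots,T-1$. The left-hand side telescopes to $V_T - V_0$, giving
\[
V_T - V_0 \le -\sum_{t=0}^{T-1}\|\hat y_t\|_2^2 + \alpha_w\sum_{t=0}^{T-1}\|w_t\|_2^2 + \alpha_v\sum_{t=0}^{T-1}\|v_t\|_2^2.
\]
Rearranging and invoking the nonnegativity~\eqref{eq:LyapNonNeg_w} to discard the nonnegative term $V_T \ge 0$ (note $(x_T,z_T,\theta_T,\lambda_T,w_T,v_T)\in\hat{\mathbf{K}}_w$ by the same argument as above) yields the finite-horizon bound
\[
\sum_{t=0}^{T-1}\|\hat y_t\|_2^2 \le V_0 + \alpha_w\sum_{t=0}^{T-1}\|w_t\|_2^2 + \alpha_v\sum_{t=0}^{T-1}\|v_t\|_2^2.
\]

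Finally I would let $T\to\infty$. Since all summands are nonnegative, each partial sum is monotone nondecreasing in $T$, so the three series converge in $[0,+\infty]$; passing to the limit preserves the inequality and gives exactly~\eqref{eq:L2bounded} (the bound remains valid, if vacuous, when the disturbance energies are infinite). I expect the only genuinely non-routine step to be the first one, namely rigorously matching the selection-oracle dynamics~\eqref{eq:difIncl_stoch} to the polynomial description of $\mathbf{K}_w$ and in particular identifying the appropriate $\theta_t$ and multiplier $\lambda_t$ realizing the applied input $u_t$; everything after that is elementary telescoping together with a monotone-limit argument.
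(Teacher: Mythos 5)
Your proposal is correct and follows essentially the same argument as the paper: establish membership of the trajectory tuples in $\mathbf{K}_w$ and $\hat{\mathbf{K}}_w$, invoke the dissipation inequality~(\ref{eq:LyapDec_w}) at each step, telescope over $t=0,\ldots,T-1$, discard the terminal term via the nonnegativity~(\ref{eq:LyapNonNeg_w}), and let $T\to\infty$. Your write-up is in fact slightly more detailed than the paper's (explicitly verifying the $\mathbf{K}_w$ membership and the monotone passage to the limit), but there is no substantive difference in approach.
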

\begin{proof}
See the Appendix.
\end{proof}

\begin{corollary}\label{lem:L2gain}
If $V$ satisfies~(\ref{eq:LyapDec_w}),~(\ref{eq:LyapNonNeg_w}) and~(\ref{eq:Lyapzer_w}) for some $\alpha_w \ge 0$ and $\alpha_v \ge 0$, then the $\mathcal{L}_2$ gain from $w$ to $\hat{y}$ respectively from $v$ to $\hat{y}$ is bounded by $\alpha_w$ respectively $\alpha_v$.
\end{corollary}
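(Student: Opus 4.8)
The plan is to obtain the corollary directly from \lemmaref{lem:rob}, whose conclusion \eqref{eq:L2bounded} already provides a summable bound on $\sum_{t=0}^\infty \|\hat y_t\|_2^2$. The only extra ingredient needed is the normalization \eqref{eq:Lyapzer_w}, whose sole purpose is to annihilate the initial value $V(x_0,z_0,\theta_0,\lambda_0,w_0,v_0)$ once the closed loop is started at the origin. Thus the heavy lifting is done in the preceding lemma, and what remains is a specialization of the disturbance and initial data.

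First I would fix the gain convention. The $\mathcal{L}_2$ gain from $w$ to $\hat y$ is the smallest $\gamma$ such that, for the zero initial state $x_0=z_0=0$ and with the other disturbance switched off ($v_t\equiv 0$), every admissible trajectory $(w_t)_{t\ge 0}$ produces an output with $\sum_{t}\|\hat y_t\|_2^2 \le \gamma\sum_t\|w_t\|_2^2$; symmetrically for the channel $v\mapsto\hat y$. In this (squared-energy) sense the claim is precisely that $\gamma\le\alpha_w$ and $\gamma\le\alpha_v$ respectively.

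Next I would specialize \eqref{eq:L2bounded}. Starting the recurrence \eqref{eq:difIncl_stoch} from $x_0=z_0=0$, the initial tuple $(0,0,\theta_0,\lambda_0,w_0,v_0)$ is a legitimate closed-loop configuration and hence lies in $\hat{\mathbf{K}}_w$, so \eqref{eq:Lyapzer_w} yields $V(0,0,\theta_0,\lambda_0,w_0,v_0)=0$. Consequently \eqref{eq:L2bounded} collapses to
\[
\sum_{t=0}^\infty \|\hat y_t\|_2^2 \;\le\; \alpha_w \sum_{t=0}^\infty \|w_t\|_2^2 \;+\; \alpha_v \sum_{t=0}^\infty \|v_t\|_2^2 .
\]
Setting $v_t\equiv 0$ isolates the first channel and gives $\sum_t\|\hat y_t\|_2^2\le\alpha_w\sum_t\|w_t\|_2^2$, i.e.\ the $\mathcal{L}_2$ gain from $w$ to $\hat y$ is bounded by $\alpha_w$; setting $w_t\equiv 0$ gives the symmetric bound $\alpha_v$ for the channel from $v$. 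This is exactly the assertion of the corollary.

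Because the analytic content is already contained in \lemmaref{lem:rob}, there is no real obstacle; the only points deserving care are bookkeeping. I would (i) check that the origin together with whatever initial multipliers and disturbance values actually occur is an element of $\hat{\mathbf{K}}_w$, so that \eqref{eq:Lyapzer_w} legitimately applies at $t=0$, and (ii) state the gain convention unambiguously, since \eqref{eq:L2bounded} bounds the squared $\ell_2$ norm of the output by $\alpha_w$ (resp.\ $\alpha_v$) times the squared $\ell_2$ norms of the disturbances, so \emph{``gain bounded by $\alpha_w$''} must be read in this squared sense. I would also note that retaining both disturbances in the displayed inequality yields a mixed small-gain/ISS-type estimate, which is why the result is naturally phrased as an immediate corollary of the lemma.
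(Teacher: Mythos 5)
Your proposal is correct and follows essentially the same route as the paper: the paper's proof likewise sets $(x_0,z_0)=(0,0)$ and invokes~(\ref{eq:Lyapzer_w}) to annihilate the term $V(x_0,z_0,\theta_0,\lambda_0,w_0,v_0)$ in~(\ref{eq:L2bounded}). Your added remarks on isolating each channel (setting $v_t\equiv 0$ or $w_t\equiv 0$) and on reading the gain in the squared sense are just explicit bookkeeping of what the paper leaves implicit.
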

\begin{proof}
Follows by setting $(x_0,z_0) = (0,0)$ and using~(\ref{eq:Lyapzer_w}) which implies that \[V(x_0,z_0,\theta_0,\lambda_0,w_0,v_0) = 0\] in~(\ref{eq:L2bounded}).
\end{proof}

Minimization of an upper bound on the $\mathcal{L}_2$ gain from $w$ and $v$ to $\hat{y}$ is then achieved by the following optimization problem:
 \begin{equation}\label{opt:robust}
\begin{array}{rclll}
& \underset{V,\alpha_w,\alpha_v}{\mathrm{minimize}} & \alpha_w + \gamma\alpha_v \vspace{1.2mm}\\
& \mathrm{s.t.} & (\ref{eq:LyapDec_w}), (\ref{eq:LyapNonNeg_w}), (\ref{eq:Lyapzer_w}),
\end{array}
\end{equation}
where the parameter $\gamma \ge 0$ trades off the minimization of the $\mathcal{L}_2$ gains from $w$ to $\hat{y}$ and from $v$ to $\hat{y}$.

\begin{remark}\label{rem:ISS}
If instead of~(\ref{eq:LyapNonNeg_w}) we require $ V(x,z,\theta, \lambda,w,v) \ge \| x\|^2$ for all $ (x,z,\theta, \lambda,w,v) \in \hat{\mathbf{K}}_w$, then this along with~(\ref{eq:LyapDec_w}) implies that the system~(\ref{eq:difIncl_stoch}) is input-to-state stable (ISS) with respect to the input $(w,v) \in \mathbf{W}(x,z)$.
\end{remark}

Since the sets $\mathbf{K}_w$ and $\hat{\mathbf{K}}_w$ are defined by finitely many polynomial equalities and inequalities, we can find upper bounds on the $\mathcal{L}_2$ gains $\alpha_w$ and $\alpha_v$ by restricting $V$ to be a polynomial of a prescribed degree and replacing the nonnegaivity constraints of~(\ref{opt:robust}) by sufficient SOS conditions according to Section~\ref{sec:SOSprog}.


\section{Computational aspects and practical guidelines}\label{sec:compAspects}

As discussed in Section~\ref{sec:SOSprog}, the constraint~(\ref{eq:sosGen}) translates to an SDP constraint. There is a natural tradeoff between computational complexity and the richness of the class of functions satisfying~(\ref{eq:sosGen}). This tradeoff is, for the most part, controlled by the size of the polynomial basis parametrizing the function $V$ and by the vector of polynomials $\beta(x)$ in~(\ref{eq:sos_eq}) associated to the SOS multipliers. In general, the larger $\beta(x)$ (in terms of the number of components), the larger the set of functions covered, but the higher the computational complexity. A typical choice for $\beta(x)$ is the vector of all multivariate monomials up to a given total degree. Another choice may be the set of all multivariate monomials up  to a given total degree corresponding to a given subset of the variables $x = (x_1,\ldots, x_n)$; this is used in the numerical examples of this paper.

More specific monomial selection techniques require some insight into the problem structure. Fortunately, there exist automatic monomial reduction techniques~(e.g., the Newton polytope~\cite{sturmfels}) which discard those monomials in $\beta(x)$ which cannot appear in the decomposition of $\sigma(x)$. It is also possible to directly reduce the size of the SDP arising from~(\ref{eq:sos_eq}) using the facial reduction algorithm of~\cite{facialReduction}; the benefit of this approach is that it works at the SDP level and is therefore not limited to the situation where $\beta(x)$ is a subset of the monomial basis, allowing one to use numerically better conditioned bases (e.g., the Chebyshev basis).

The transformation from the abstract form~(\ref{eq:sos_eq}) can be carried out automatically using freely available modeling tools such as Yalmip~\cite{yalmip}, SOSTOOLS~\cite{sostools} or SOSOPT~\cite{sosopt} and solved using SDP solvers such as SeDuMi~\cite{sedumi} or MOSEK~\cite{mosek}.

It should be noted that at present the approach is tractable for problems of moderate size only unless special effort is made in terms of exploiting the problem structure, e.g., by using a problem-specific monomial selection heuristic (exploiting, e.g., the sparsity or symmetries of the problem) or by using a custom SDP solver. In addition, the scalability of the approach could be improved if alternative sufficient nonnegativity conditions were used instead of the standard (Putinar-type) SOS conditions. For example, the recently proposed DSOS and SDSOS cones of nonnegative polynomials~[1] or the bounded-degree hierarchy of~[21] have shown promising results in other application areas. In addition, a more efficient implementation of the monomial reduction itself and polynomial handling in general would also allow the approach to scale higher as these may account for a significant portion of the computation time.

When implementing the method in practice it is advisable to start with a high level Yalmip or SOSOPT implementation and a simple choice of a monomial basis, e.g., monomials in all state variables for the Lyapunov function and monomials in all variables involved for the SOS and polynomial multipliers. The degrees should be selected as low as possible, e.g., quadratic for the Lyapunov function and SOS multipliers and linear for polynomial multipliers. If this selection does not lead to a satisfactory certificate, the degrees should be increased and/or decision variables or Lagrange multipliers included in the Lyapunov function. If the computation time becomes an issue, a more sophisticated monomial selection and/or customization is required, as described above.

\section{Numerical examples}\label{sec:numEx}


This section illustrates the approach on two numerical examples. For the first two numerical examples we used Yalmip~\cite{yalmip} as the modeling tool; for the third example we used SOSOPT~\cite{sosopt}. The SDP solver used was MOSEK~\cite{mosek} for all three examples.

\subsection{Bilinear system + PI with saturation -- performance analysis}\label{ex:bilinear}
First we demonstrate the approach on a bilinear dynamical system
\begin{align*}
x^+ &= f_x(x,u) := \begin{bmatrix} 0.9x_1 + u + 0.2ux_1 \\ 0.85x_2 + x_1 \end{bmatrix}\\
y & =  f_y(x) := x_2
\end{align*}
controlled by a PI controller with input saturation given by
\begin{align*}
z^+ &= f_z(z,y) := z - k_{i}y\\
s & =  f_s(z,y) := k_{p} (z - y)
\end{align*}
with $k_p = 0.05$, $k_i = 0.02$. The control input is given by saturating $u$ on the input constraint set $U = [-0.5,0.5]$, i.e, $u = \mr{proj}_U(s) $. In addition the system is subject to the state constraints $\| x \| _\infty \le 10$. In view of~Section~\ref{sec:inputSat}, this set up can be analyzed using the presented method. The goal is to estimate the performance of this closed-loop system with respect to the cost function~(\ref{eq:cost}) with $l(x,u) = \|x \|^2 + u^2$, $\alpha  = 0.95$ and $L = (2\cdot 10^2 + 0.5^2) / (1 - \alpha) =  4.05\cdot 10^3$ chosen according to~(\ref{eq:L}). We estimate the performance using the optimization problem~(\ref{opt:C_ub}), where we consider $V$ as function of $(x,z)$ only and therefore do not need the upper bounding function $\overline{V}$. Assume that we are interested only in closed-loop performance for initial conditions starting from $\mathbf{X}' = \{ x \mid \| x \| _\infty \le 1 \} $ and $z = 0$ (i.e., zero integral component at the beginning of the closed-loop evolution), which is a strict subset of the set $\mathbf{X} := \{ (x,z) \mid \| x \| _\infty \le 10, z\in \Rb \} $. To this effect we minimize  $\int_{\mathbf{X} '} V(x,0)\, dx$ as the objective of (\ref{opt:C_ub}), which corresponds to setting $\rho(x,z) = \mathbb{I}_{\mathbf{X}'}(x)\delta_0(z)$, where $\mathbb{I}_{\mathbf{X}'}$ is the indicator function of $\mathbf{X}'$ and $\delta_0$ the dirac distribution centered at zero. We compare the upper bound obtained by solving~(\ref{opt:C_ub}) with the exact cost function evaluated on a dense grid of initial conditions in~${\mathbf{X} '} $ by forward simulation of the closed-loop system. The comparison is in Figure~(\ref{fig:perform1}); we see a relatively good fit over the whole region of interest ${\mathbf{X} '} $. The constraints~(\ref{eq:ub_V}) and (\ref{eq:ub_V_L}) of~(\ref{opt:C_ub}) were replaced with sufficient SOS conditions with SOS multipliers of degree four containing only monomials in $(x,z)$ and polynomial multipliers of degree three containing monomials in $(x,z,\theta,\lambda)$.

\begin{figure}[t!]
	\begin{picture}(40,40)
	
	\put(52,2){\includegraphics[width=50mm]{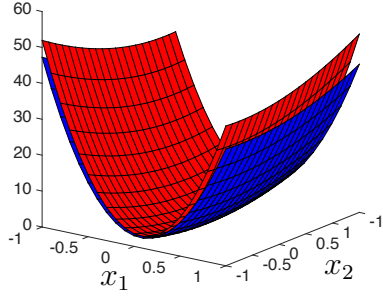}}

	\put(63,2.0){ $x_1$}
	\put(92.5,3.5){ $x_2$}

	\end{picture}
	\caption{\small Bilinear system performance bound -- Red: upper bound $V(x,0)$ of degree 6. Blue: true closed-loop cost $J(x,0)$.}
	\label{fig:perform1}
\end{figure}

\subsection{MPC + observer -- robust stability}
Consider the Quanser active suspension model in continuous-time 
\begin{align*}
\dot{x} &= A_c x + B_c u \\ 
y &= Cx
\end{align*}
 with
\[
\footnotesize A_c =  \begin{bmatrix} 0 & 1 & 0 & -1 \\  -K_s/M_s & -B_s/M_s & 0 & B_s/M_s \\ 0 & 0 & 0 & 1 \\  K_s/M_{us} & B_s/M_{us} & -K_{us}/M_{us} & -(B_s+B_{us})/M_{us} \end{bmatrix},
\]
\[
\footnotesize B_c = \begin{bmatrix} 0 & 1/M_s & 0 & -1/M_{us} \end{bmatrix}^T, \quad C = \begin{bmatrix}1 &0 &0& 0\\0& 0& 1& 0 \end{bmatrix},
\]
where $K_s = 1205$, $K_{us} = 2737$, $M_{us} = 1.5$, $B_s = 20$, $B_{us} = 20$ and the mass $M_s$ is unknown and possibly time-varying in the interval $[2.85,4]$. After discretization\footnote{The matrices $A_0$, $A_1$, $B_0$, $B_1$ were found as a least-squares fit of the continuous-time dynamics discretized on a grid of values of $w\in [1/4,1/2.85]$.} with sampling period~$0.01$, this model can be written as 
\begin{align*}
x^+  & =  (A_0 + A_1w)x + (B_0+B_1w)x =: f_x(x,u,w) \\ 
y  &= Cx =: f_y(x),
\end{align*}
 where $w := 1/M_s \in [1/4,1/2.85]$.

The discretized system is controlled by an MPC controller which minimizes along a prediction horizon $N$ the cost function $x_N^T P x_N+\sum_{i=0}^{N-1} x_i^T Q x_i + u_i^T R u_i$ with $Q = I$ and $R = 20$ subject to the input constraints $|u| \le u_{\mathrm{max}} =  250$ and nominal dynamics \begin{equation}\label{eq:nomDynamics}
x^+ = \bar{A}x + \bar{B}u,
\end{equation}
where $\bar A = A_0 + A_1\bar{w}$ and $\bar B = B_0 + B_1\bar{w}$ with $\bar{w} = 0.3004$ being the mid point of the range of values of the uncertain parameter $w$. The matrix $P$ is the unique positive definite solution to the discrete algebraic Riccati equation associated to $(\bar A,\bar B, Q, R)$.

The MPC controller takes as its input (i.e., as the initial state of the recurrence~(\ref{eq:nomDynamics})) the estimate of the state $x$, denoted by $z$, provided by a full order Luenberger observer with the dynamics
\[
z^+ =  \bar{A}z + \bar{B}u + K_{\mathrm{est}}(Cz - y) =: f_z(z,y) ,
\]
with\footnote{The gain $K_{\mathrm{est}}$ was obtained as the optimal Kalman filter gain with measurement and noise covariance matrices equal to the identity matrix.}
\[K_{\mathrm{est}} =
\begin{bmatrix}
 0.6137 &   -0.1594 &    0.7947 &    0.0585
\end{bmatrix}.
\]
The output mapping $s = f_s(z,y)$ from Figure~\ref{fig:probSetup} is in this case given by $f_s(z,y) = z$.

 This problem is expressed in a dense form (i.e., the state is eliminated using the nominal dynamics equation); hence at each time step of the closed-loop operation the MPC controller solves the optimization problem
 \begin{equation}
\label{opt:uncertainConcrete}
\begin{array}{ll}
\underset{\theta\in \Rb^{n_\theta}}{\mbox{minimize}} & \theta ^T H \theta + s^T F \theta  \\
\mbox{subject to} & \theta \le u_{\mathrm{max}}\\ 
&							\hspace{-3mm}-\theta \le u_{\mathrm{max}}	,
\end{array}
\end{equation}
which is parametrized by the estimated state $s = z$ and $H$ and $F$ are appropriate matrices that are readily obtained from $\bar A$ and $\bar B$. The decision variable $\theta \in \mathbb{R}^N$ is the predicted sequence of control inputs along the prediction horizon $N$. Problem~(\ref{opt:uncertainConcrete}) is of the form of Problem~(\ref{opt:gen}) and hence we can use the results of Section~\ref{sec:perfRob} to seek an ISS Lyapunov function $V$ quadratic in the variables $(x,z)$ (see Remark~\ref{rem:ISS}) while minimizing the $\mathcal{L}_2$ gain $\alpha_w$ using the optimization problem~(\ref{opt:robust}) (with $\alpha_v = 0$). The problem~(\ref{opt:robust}) is feasible (for all values of $N$ tested) when we take the SOS multipliers $\sigma_1$, $\sigma_2$ in equation~(\ref{eq:sos_decrease}) of degree two in $(x,z,\theta,\lambda)$ and the polynomial multipliers $p_1$, $p_2$ of degree one in $(x,z,\theta,\lambda)$. In Eq.~(\ref{eq:sos_nonneg}) we set all multipliers to zero except for $\bar{\sigma}_0$. The optimal $\mathcal{L}_2$ gain $\alpha_w$ is equal to zero (up to numerical errors) for all values of $N$ tested, showing closed-loop global robust asymptotic stability (i.e., convergence $\|x_k\| \to 0$ for any sequence $\{w_k\in [1/4,1/2.85]\}_{k=0}^\infty$ and for any initial estimate of the state). Figure~\ref{fig:uncertain} shows a sample trajectory of $\|x_k\|$, $\|z_k\|$, $V(x_k,z_k)$, $u_k$ and $w_k$ for $N = 5$. Note that, for numerical reasons, the control input was scaled to $[-1,1]$ before solving the verification problem and as such is reported in Figure~\ref{fig:uncertain}. Computation time breakdown for different values of $N$ is reported in Table~\ref{tab:time1}.

We also note that various modifications of the setup can be readily tested. For example, verifying robust stability when we use $[x(1),z(2),x(3),z(4)]^T$ as the initial state for the predictions of the MPC controller instead of $z$ (i.e., we use the two available state measurements instead of their estimates) amounts to changing one line of the Yalmip code and produces very similar results.

\begin{figure*}[th]
\begin{picture}(140,40)
\put(-10,2){\includegraphics[width=40mm]{./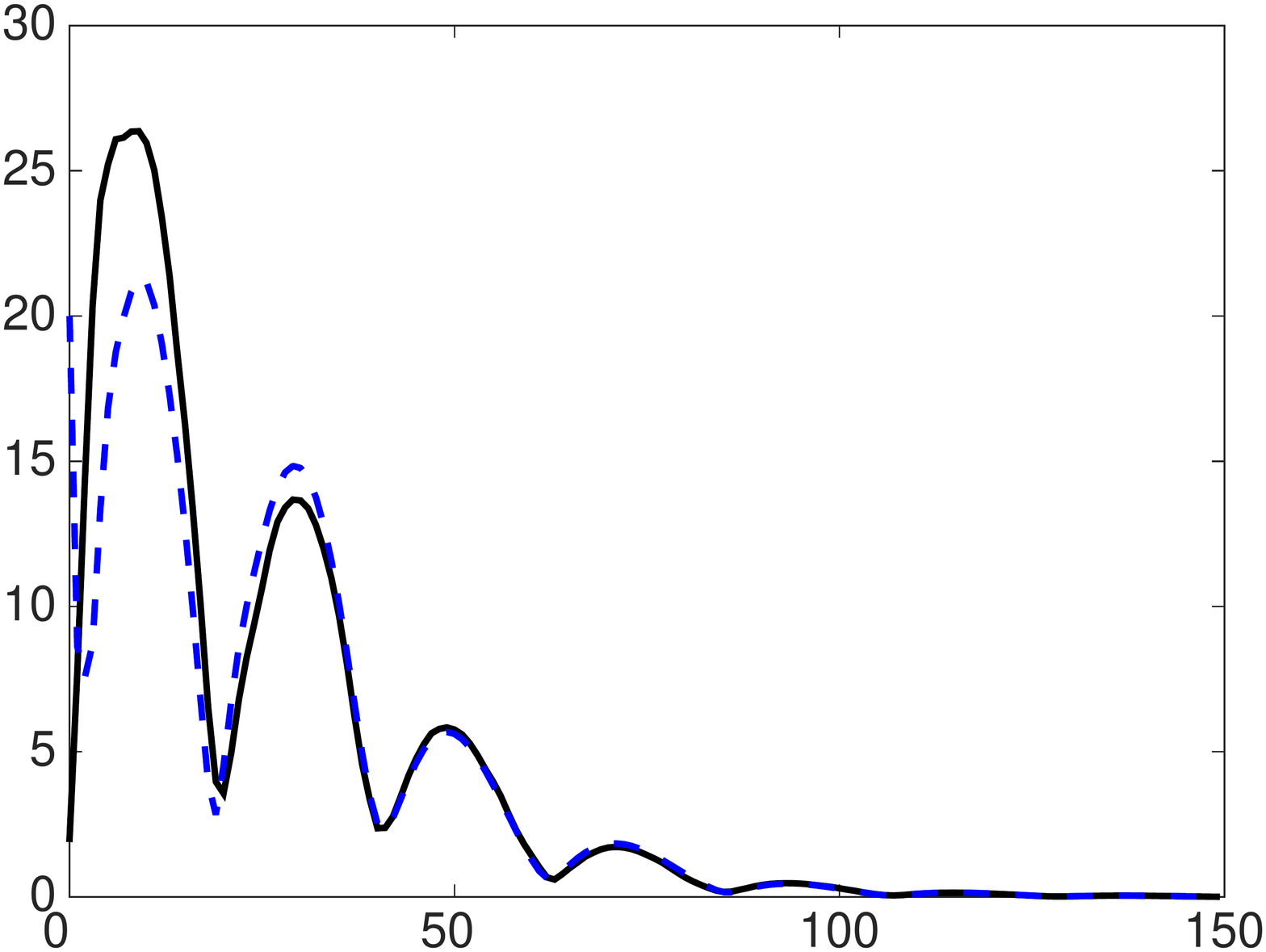}} 
\put(32,2){\includegraphics[width=40mm]{./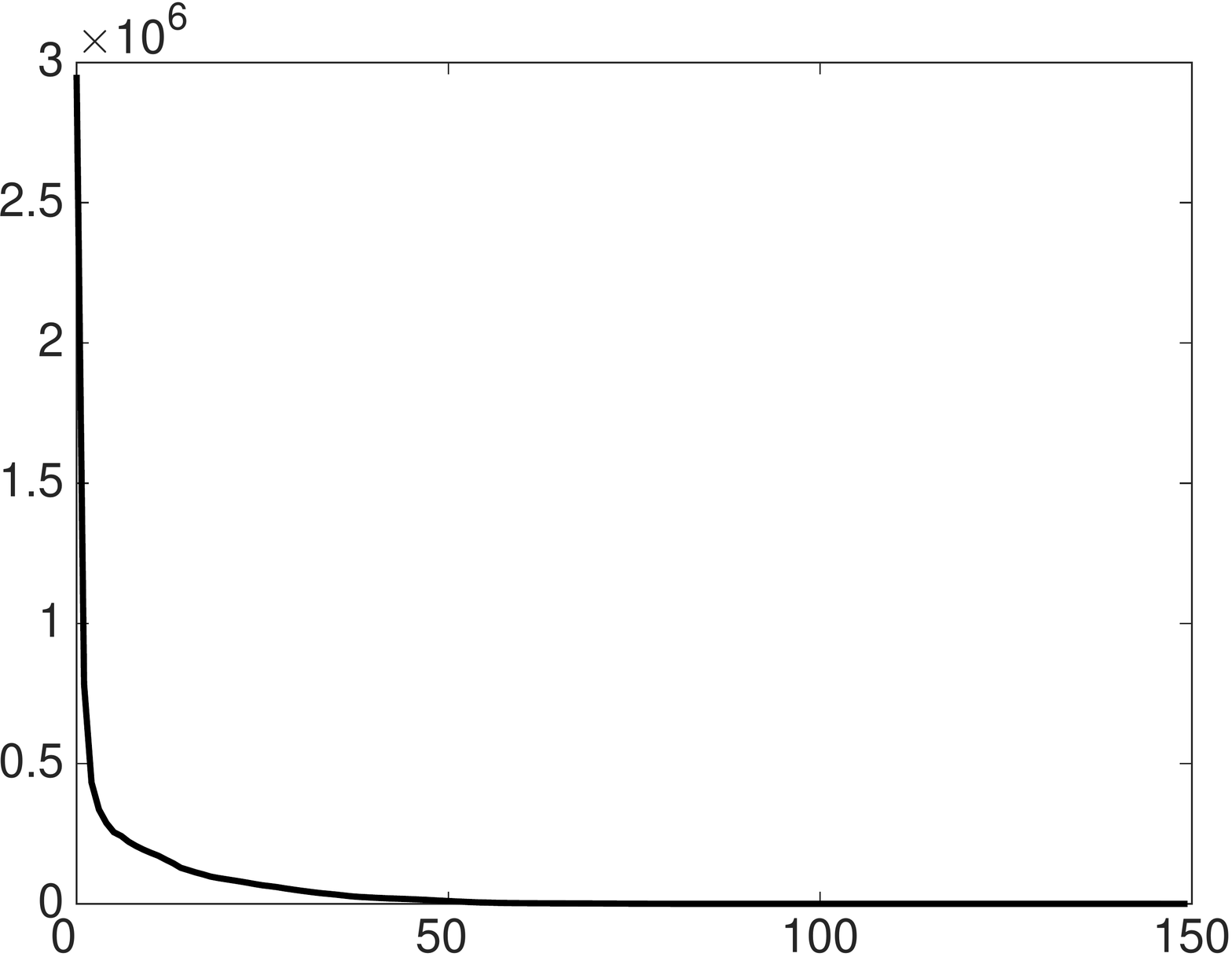}}
\put(77,2){\includegraphics[width=40mm]{./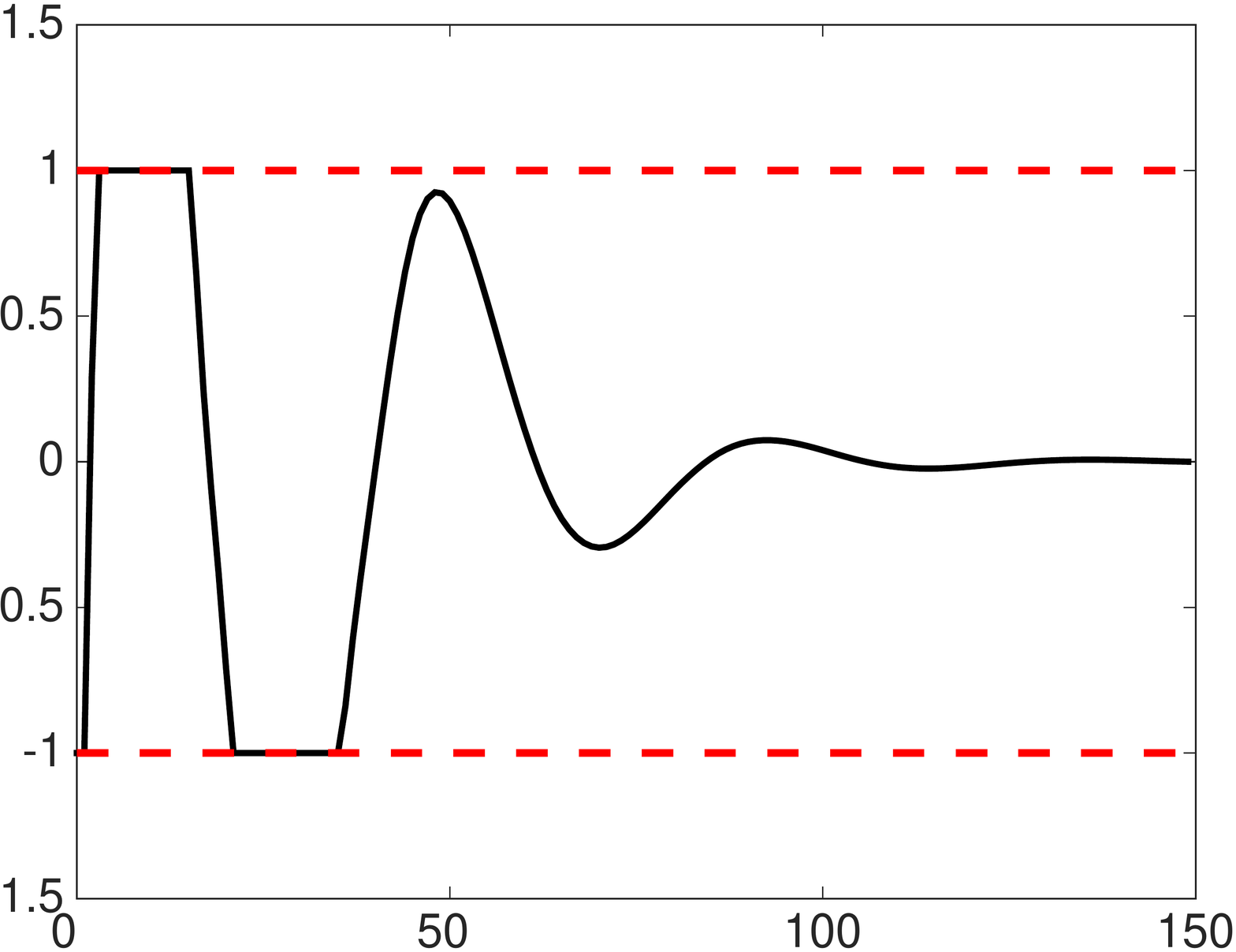}}
\put(122,2){\includegraphics[width=40mm]{./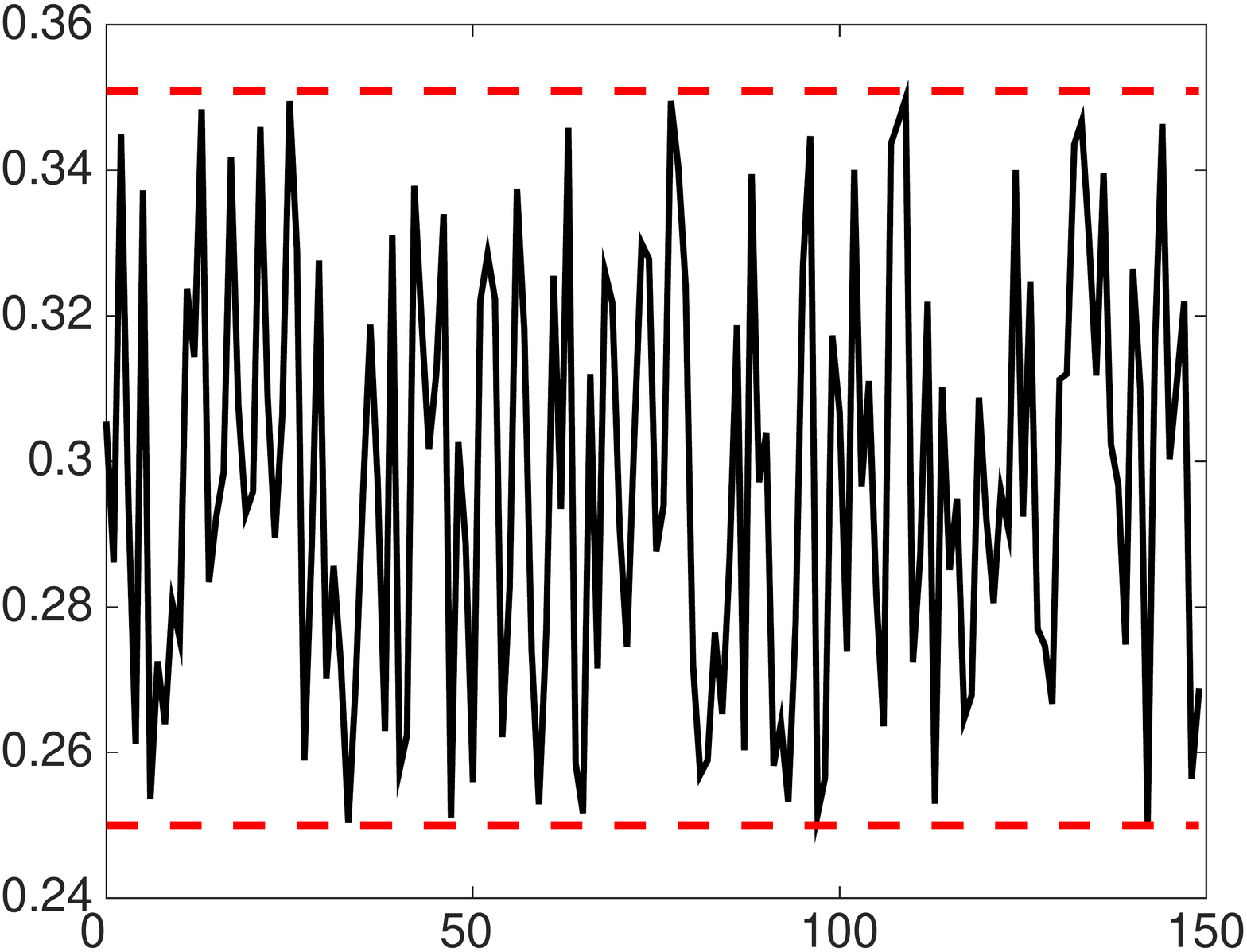}}

\put(10,0.5){\footnotesize $k$}
\put(52.5,0.5){\footnotesize $k$}
\put(97,0.5){\footnotesize $k$}
\put(142,0.5){\footnotesize $k$}

\put(2,20){ $\|x_k\|_2,\; \|z_k\|_2$}
\put(42,20){ $V(x_k)$}
\put(94,20){ $u_k$}
\put(133,26.5){ $w_k$}

\end{picture}
\caption{Global asymptotic stability of an uncertain system -- a trajectory starting from the initial condition $x_0 = [1\; 1\; 1\; 1]^T$, $z_0 = [-10\;-10\;-10\;-10]^T$ of the norm of the state $\|x_k\|_2$ (black) and the norm of the state estimate $\| z_k\|_2 $ (blue), the Lyapunov function $V(x_k,z_k)$, the control input $u_k$ and the disturbance $w_k$.}
\label{fig:uncertain}
\end{figure*}

\begin{table}[ht]
\centering
\small
\caption{\rm MPC + observer -- timing breakdown as a fucntion of the prediction horizon $N$. SDP solver: MOSEK 8 beta; parsing and monomial reduction: Yalmip.}\label{tab:time1}\vspace{0mm}
\begin{tabular}{cccc}
\toprule
& parsing & monomial reduction & SDP solve  \\\midrule
$N$ = $1$ & 5.1\,s & 2.0 \,s & 0.71 \,s \\ \midrule
$N$ = $2$ & 5.6\,s &  2.7  \,s & 1.1 \,s \\ \midrule
$N$ = $3$ & 6.3\,s & 3.1\,s & 2.3 \,s \\ \midrule
$N$ = $4$ & 7.4\,s & 4.2\,s & 4.5 \,s \\ \midrule
$N$ = $5$ & 9.2\,s & 5.9\,s & 11.1 \,s \\ 
 \bottomrule
\end{tabular}
\end{table}

\subsection{Stability of a quadcopter on a given subset}
This example investigates stability of a linearized attitude and vertical velocity model of a quadcopter. The system has seven states (Roll, Pitch and Yaw angles and angular velocities, and velocity in the vertical direction) and four control inputs (the thrusts of the four rotors). The system is controlled by a one-step MPC controller (with perfect state information) which at time~$k$ approximately minimizes the cost $x_k^TQx_k + u_k^TRu_k+x_{k+1}^TPx_{k+1}$, where $Q = I$, $R= 10 I$ and $P$ is the infinite-time LQ matrix associated to the cost matrices $Q$ and $R$, using one step of the projected gradient method~(\ref{eq:gradStep}) subject to the input constraints $\|u \|_\infty \le 1$. This model is open-loop unstable and therefore we investigate closed-loop stability in the region ${\bf X}=[-1,1]^7$ as described in Section~\ref{sec:stabLocal}. The SOS problem~(\ref{opt:sos}) is feasible when seeking a quadratic Lyapunov function using SOS multipliers $\sigma_1$, $\sigma_2$ in equation~(\ref{eq:sos_decrease}) of degree two in $x$ and the polynomial multipliers $p_1$, $p_2$ of degree one in $(x,\theta, \lambda)$. The smallest set of monomials constituting~$\sigma_0$ is chosen automatically by SOSOPT. In~(\ref{eq:sos_nonneg}), we chose all multipliers zero except for $\bar{\sigma}_0$ whose monomials are determined automatically by SOSOPT. Computing the largest $\gamma$ such that $\{x \mid V(x) \le \gamma \}$ is included in $\bf X$ yields $\gamma = 6.37$; this proves that all trajectories starting in $\{x \mid V(x) \le \gamma \}$ stay there and converge to the origin. One closed-loop trajectory of $\|x\|_2$, $V(x)$ and $u$ are depicted in Figure~\ref{fig:quad}; note that this trajectory does not start in $\{x \mid V(x) \le \gamma \}$ but still converges to the origin and the Lyapunov function decreases. The parsing time and monomial reduction carried out by SOSOPT took $2.7$\,s and $16.2$\,s, respectively; the MOSEK solve time was 0.55\,s.

\begin{figure*}[th]
\begin{picture}(140,40)
\put(5,3){\includegraphics[width=40mm]{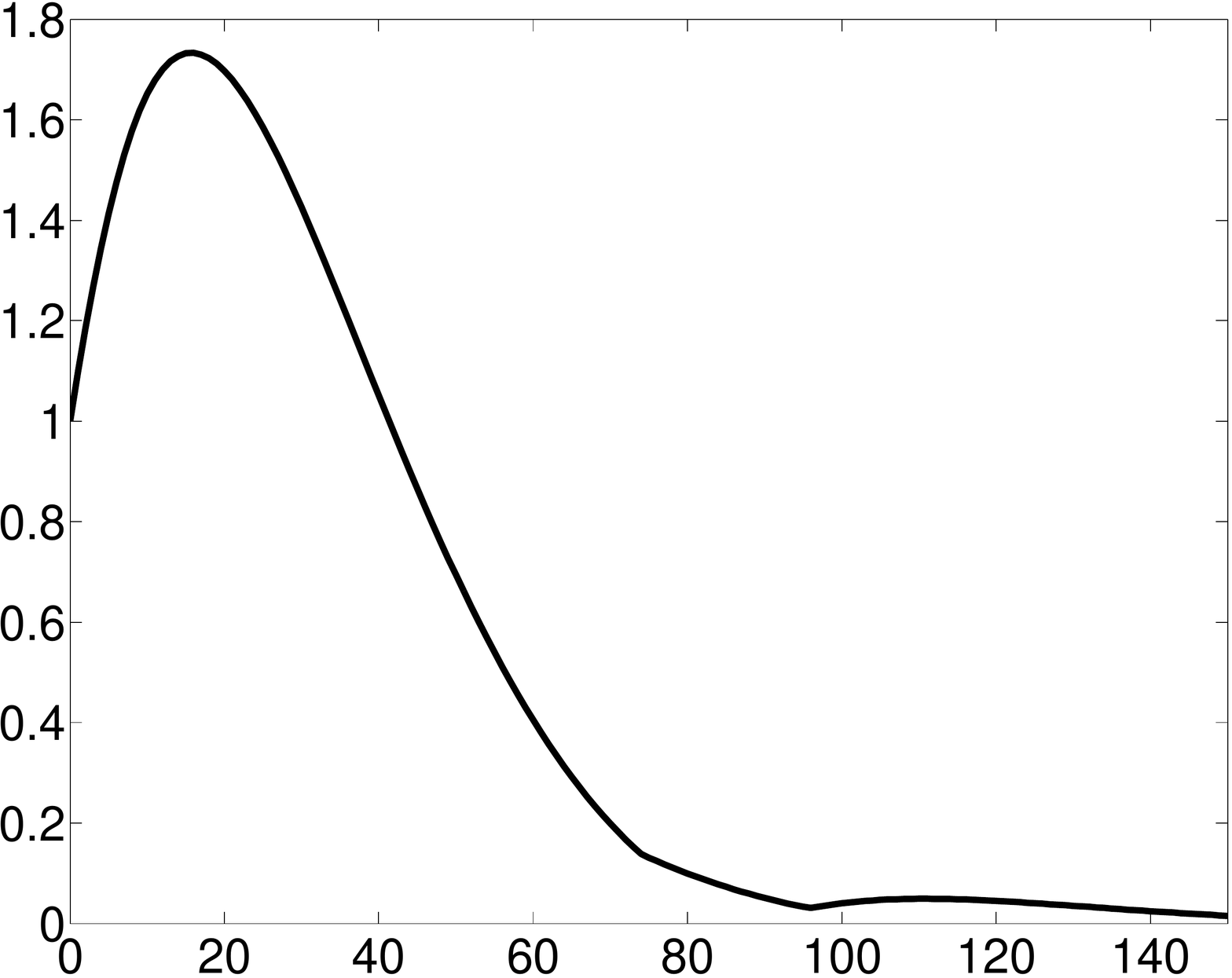}}
\put(55,3){\includegraphics[width=40mm]{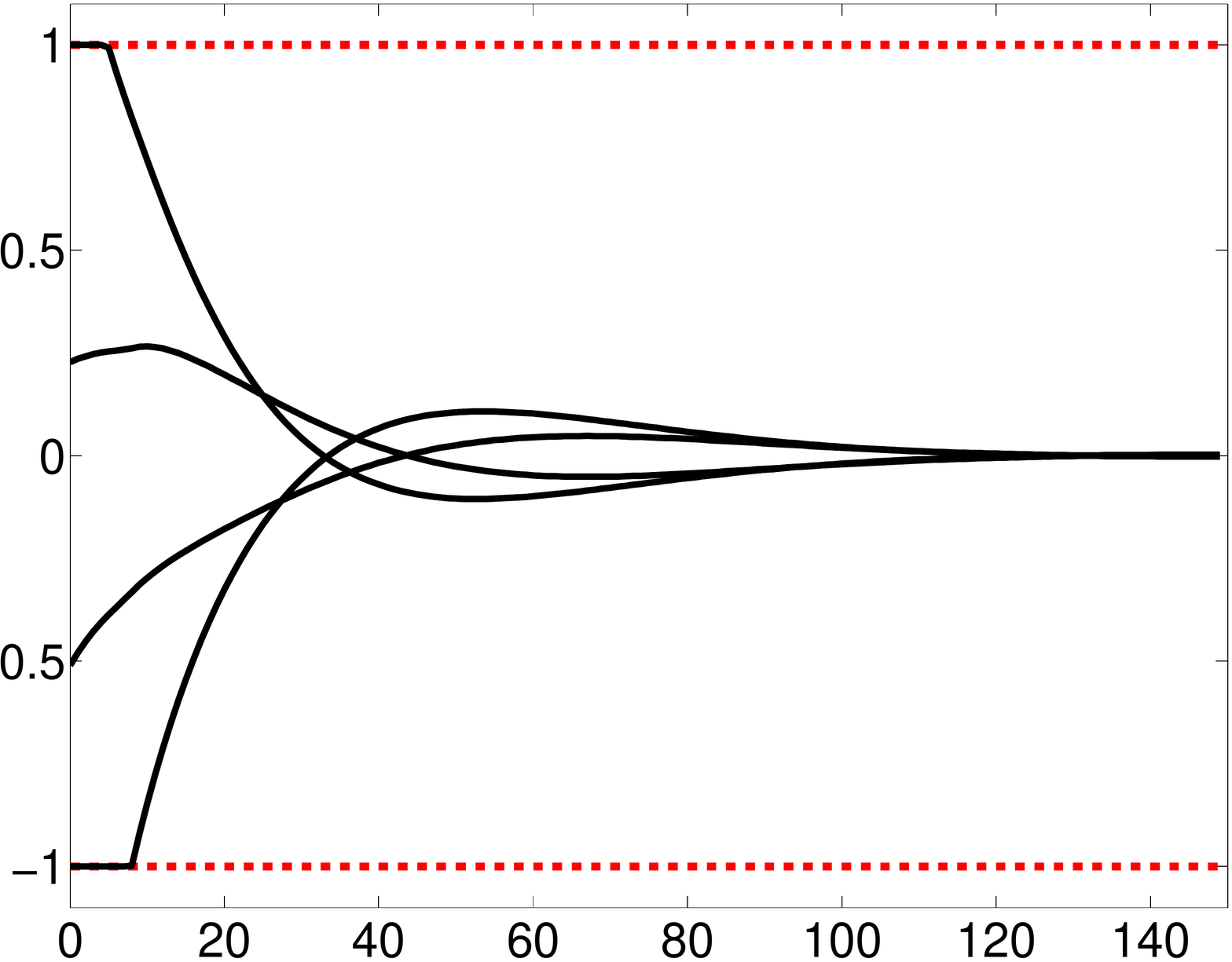}}
\put(105,3){\includegraphics[width=40mm]{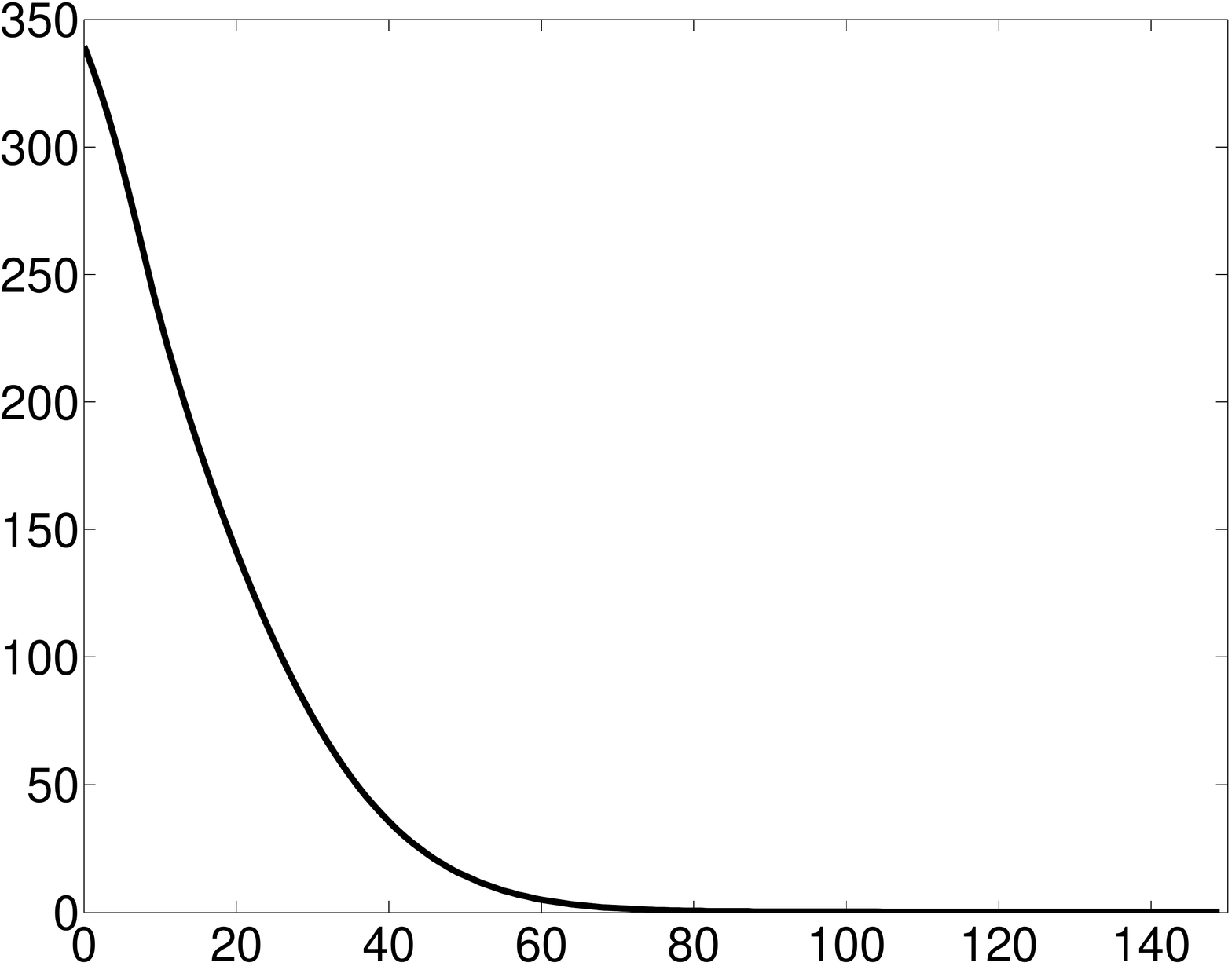}} 


\put(25,0.5){\footnotesize $k$}
\put(75,0.5){\footnotesize $k$}
\put(126,0.5){\footnotesize $k$}

\put(25,10){ $\|x_k\|_\infty$}
\put(65,22){ $u_k$}
\put(115,20){ $V(x_k)$}

\end{picture}
\caption{Local stability of a quadcopter -- trajectories of the norm of the state $\|x_k\|$, the Lyapunov function $V(x_k)$, the control input $u_k$ for initial condition $x_0 = [1 \; 1\; 1\; 1 \;1 \;1\; 1]^\top$.}
\label{fig:quad}
\end{figure*}

\section{Conclusion}
We presented a method for closed-loop analysis of polynomial dynamical systems controlled by an optimization based controller. Provided that all data is polynomial the analysis problem boils down to a semidefinite programming (SDP) problem which can be readily modeled and solved using freely available tools. The limiting factor is the parsing time of the SOS problems, which, however, should be possible to ameliorate by a tailored implementation. Besides the parsing time, the second limiting factor is the size of the resulting SDP problem, especially if tighter performance bounds are required; using first-order-like SDP solvers (e.g., SDPNAL~\cite{sdpnal}), and/or parellel solvers (e.g., SDPARA~\cite{sdpara}) should enable the presented method to scale beyond the reach of interior point methods used in this paper.

\section*{Acknowledgments}
The first author would like to thank Ye Pu for a discussion that initiated the work, and Zlatko Emedji and Mahdieh Sadat Sadabad for providing the uncertain system for the numerical example.

\section*{Appendix}
\subsection*{Proof of Lemma~\ref{lem:ub}}
Let $(x_t,z_t)$ be a solution to~(\ref{eq:difIncl}) for $t \in \{0,1,\ldots\}$ and let $u_t \in \kappa(\mathbf{K}_s)$. Then there exist $\theta_t \in \Rb^{n_\theta}$ and $\lambda_t \in \Rb^{n_\lambda}$ such that
\begin{equation}\label{eq:barKaux}
(x_t,z_t,\theta_t,\lambda_t,x_{t+1},z_{t+1},\theta_{t+1},\lambda_{t+1}) \in \bf \bar{K}
\end{equation}
and
\begin{equation}\label{eq:hatKaux}
(x_t,z_t,\theta_t,\lambda_t) \in \bf \hat{K}
\end{equation}
for all $t \in \{0,1,\ldots, \tau-1\}$, where $\tau := \tau(x_0,z_0)$ is defined in~(\ref{eq:tau}) and
\begin{equation}\label{eq:KcAux}
(x_{\tau},z_{\tau},\theta_{\tau},\lambda_{\tau}) \in \mathbf{\hat{K}_c}.
\end{equation}
Using~(\ref{eq:ub_V}) and (\ref{eq:barKaux}), we conclude that
\[
V(x_t,z_t,\theta_t,\lambda_t) - \alpha V(x_{t+1},z_{t+1},\theta_{t+1},\lambda_{t+1}) - l(x_t,u_t) \ge 0 
\]
for all $t \in \{0,1,\ldots, \tau-1\}$. This implies that
\[
\alpha^{\tau}V(x_{\tau},z_{\tau},\theta_{\tau},\lambda_{\tau}) + \sum_{t=0}^{\tau-1}\alpha^t l(x_t,u_t) \le V(x_0,z_0,\theta_0,\lambda_0) .
\]
Using~(\ref{eq:ub_V_L}) and (\ref{eq:KcAux}) we conclude that
\[
\mathcal{C}(x_0,z_0) = \alpha^{\tau}L + \sum_{t=0}^{\tau-1}\alpha^t l(x_t,u_t) \le V(x_0,z_0,\theta_0,\lambda_0) .
\]
Using~(\ref{eq:ub_barV}) and (\ref{eq:hatKaux}) we have $ V(x_0,z_0,\theta_0,\lambda_0) \le \overline{V}(x_0,z_0)$ and hence $\overline{V}(x_0,z_0) \ge \mathcal{C}(x_0,z_0) $ as desired. \hfill $\blacksquare$

\subsection*{Proof of Lemma~\ref{lem:ub_stoch}}
The proof proceeds along similar lines as the deterministic version by decomposing the probability space according to the values of the stopping time $\tau$. On the probability event $\{\tau = k\}$ we get, by iterating the inequality~(\ref{eq:ub_V_stoch}),
\[
\alpha^{k}V(x_{k},z_{k}) + \E\Big\{ \sum_{t=0}^{k-1}\alpha^t l(x_t,u_t) \mid \tau = k \Big \} \le V(x_0,z_0).
\]
Since $(x_k,z_k)\notin \mathbf{X}$ on $\{\tau = k\}$ we have $V(x_{k},z_{k}) \ge L$ by (\ref{eq:ub_V_L_stoch}) and hence
\[
\alpha^{k}L + \E\{ \sum_{t=0}^{k-1}\alpha^t l(x_t,u_t) \mid \tau = k\} \le V(x_0,z_0)
\]
on $\{\tau = k\}$. Summing over $k$ gives the result.
\subsection*{Proof of Lemma~\ref{lem:rob}}
Let $(x_t,z_t)$ be a solution to~(\ref{eq:difIncl_stoch}) with $(w_t,v_t) \in \mathbf{W}(x_t,z_t)$ for all $t \in \{0,1,\ldots\}$. Then there exist $\theta_t \in \Rb^{n_\theta}$ and $\lambda_t \in \Rb^{n_\lambda}$ such that
\[
(x_t,z_t, \theta_t, \lambda_t, w_t, v_t,x_{t+1} z_{t+1}, \theta_{t+1}, \lambda_{t+1},w_{t+1},v_{t+1})  \in \mathbf{K}_w
\]
and
\[
(x_t,z_t, \theta_t, \lambda_t, w_t, v_t,) \in \hat{\mathbf{K}}_w
\]
for all $t\in \{0,1,\ldots\}$. Hence, by~(\ref{eq:LyapDec_w}) and (\ref{eq:LyapNonNeg_w}),
\begin{align}
 \nonumber &V(x_{t+1},z_{t+1},\theta_{t+1},\lambda_{t+1},w_{t+1},v_{t+1}) \! - \!  V(x_t,z_t,\theta_t,\lambda_t,w_t,v_t)\\   & \hspace{2.1cm} \le - \| \hat{y}_t\|_2^2 + \alpha_w \| w_t\|_2^2 + \alpha_v \| v_t\|_2^2 \label{eq:robAux1}
\end{align}
and
\begin{equation*}
V(x_t,z_t,\theta_t,\lambda_t) \ge 0
\end{equation*}
for all $t\in \{0,1,\ldots\}$. Iterating~(\ref{eq:robAux1}) we obtain
\begin{align*}
&\sum_{t=0}^{T-1} \| \hat{y}_t\|_2^2 \le - V(x_T,z_T,\theta_T,\lambda_T,w_T,v_T) \\ & + V(x_0,z_0,\theta_0,\lambda_0,w_0,v_0)  +\sum_{t=0}^{T-1} \alpha_w \| w_t\|_2^2+ \alpha_v\| v_t\|_2^2 \\ &\le V(x_0,z_0,\theta_0,\lambda_0,w_0,v_0)  + \sum_{t=0}^{T-1} \alpha_w \| w_t\|_2^2+  \alpha_v\| v_t\|_2^2,
\end{align*}
where we have used the fact that $V(x_T,z_T,\theta_T,\lambda_T) \ge 0$ in the second inequality. Letting $T$ tend to infinity gives the result. \hfill $\blacksquare$

\bibliographystyle{abbrv}
\bibliography{./References}

\end{document}